\documentclass[11pt,reqno]{amsart}

\usepackage[a4paper,margin=2.7cm]{geometry}
\usepackage{amsmath,amssymb,amsthm}
\usepackage{booktabs}
\usepackage{microtype}
\usepackage{xcolor}
\usepackage[colorlinks=true,linkcolor=blue!55!black,citecolor=blue!55!black,urlcolor=blue!55!black]{hyperref}

\theoremstyle{plain}
\newtheorem{theorem}{Theorem}[section]
\newtheorem{proposition}[theorem]{Proposition}
\newtheorem{lemma}[theorem]{Lemma}
\newtheorem{corollary}[theorem]{Corollary}
\theoremstyle{definition}
\newtheorem{definition}[theorem]{Definition}
\newtheorem{example}[theorem]{Example}
\newtheorem{remark}[theorem]{Remark}
\newtheorem{observation}[theorem]{Computational Observation}
\newtheorem{problem}{Problem}

\DeclareMathOperator{\Stab}{Stab}
\DeclareMathOperator{\Aut}{Aut}
\newcommand{\rM}{r_{M}}
\newcommand{\rW}{r_{W}}
\newcommand{\EM}{E_{M}}
\newcommand{\EW}{E_{W}}

\title[Regular anti-phase templates in the stable marriage problem]
{Regular Anti-Phase Templates in the Stable Marriage Problem:\\
A Generator Criterion, its Converse, and a Counting Bound}

\author{Yoshiteru Ishida}
\date{Version 15, July 2026}

\subjclass[2020]{05A05, 05E18, 91B68, 06D05}
\keywords{stable marriage problem; Latin preference profile; sex-equal stable matching;
automorphism group; regular group action; anti-phase template; stable matching lattice;
rotation poset; maximum number of stable matchings}

\begin{document}

\begin{abstract}
We study a family of highly symmetric instances of the stable marriage problem built from
regular actions of finite groups. Given a finite group $G$ of order $n$ and an ordering
$A=(a_0,\dots,a_{n-1})$ of its elements, the \emph{regular anti-phase template} $P(G,A)$ is
the profile in which $m_g$ ranks $w_{ga_0}\succ\cdots\succ w_{ga_{n-1}}$ and $w_h$ ranks
$m_{ha_{n-1}^{-1}}\succ\cdots\succ m_{ha_0^{-1}}$. Our first result explains the word
\emph{anti-phase}: among all automorphism-maximal profiles of size $n$, the templates $P(G,A)$
are \emph{exactly} those satisfying the constant rank-sum identity
$\rM(m,w)+\rW(w,m)=n+1$. The construction is therefore canonical rather than ad hoc.

The template has $n$ canonical stable matchings $\mu_t(m_g)=w_{ga_t}$. Writing
$q_b=a_{b-1}a_b^{-1}$ for the adjacent quotients, we prove a coset lemma: for any stable
matching, the top level set of its index function is a union of left cosets of
$\langle q_b\rangle$, where $b$ is the maximal index. This yields a sharpened generator
criterion, and — our main new result — its exact converse:
\[
  |\Stab(P(G,A))| = n
  \iff
  \langle q_b\rangle = G \text{ for every } b=1,\dots,n-1 .
\]
We further give the counting bound
$|\Stab(P(G,A))| \ge n + \sum_{b=1}^{n-1}\bigl(2^{[G:\langle q_b\rangle]}-2\bigr)$,
which is sharp for all groups of order at most $5$ and for both groups of order $4$; in
particular it yields $|\Stab(P(V_4,A))| \ge 4+3(2^2-2) = 10$, with equality confirmed by
enumeration; this gives a structural explanation of the classical maximum $f(4)=10$.

Two consequences correct the picture suggested by earlier versions of this work. If $|G|=p$
is prime, then $P(\mathbb{Z}_p,A)$ has exactly $p$ stable matchings for \emph{every} ordering
$A$; but for composite order the count depends on $A$ even for cyclic groups — the template
$P(\mathbb{Z}_4,(0,2,1,3))$ has $8$ stable matchings, not $4$. The chain-lattice phenomenon
is thus governed by primality, not by cyclicity. Along the way we record the sex-equality
spectrum of the cyclic profile and an odd/even parity theorem, and we note that the Klein
template has two exactly sex-equal stable matchings where the cyclic profile $C_4$ has none.
All computational claims are verified by an accompanying script.
\end{abstract}

\maketitle

\section{Introduction}

The stable marriage problem (SMP), introduced by Gale and Shapley~\cite{GaleShapley},
is one of the basic models of two-sided matching. Its stable matchings form a finite
distributive lattice under the men-dominance order, represented by a rotation
poset~\cite{Gusfield,Manlove}. This paper studies a class of highly symmetric profiles
sitting at the intersection of three themes: Latin preference profiles and constant
rank-sum identities; fairness objectives, especially sex-equality; and automorphism-maximal,
group-theoretically defined templates.

The starting point is the cyclic anti-phase profile $C_n$, given by
\[
  m_i : w_i \succ w_{i+1} \succ \cdots \succ w_{i-1},
  \qquad
  w_j : m_{j+1} \succ m_{j+2} \succ \cdots \succ m_j
\]
with indices modulo $n$. It is a circulant Latin profile, but unlike the Latin instances used
by Benjamin, Converse and Krieger to construct exponentially many stable
matchings~\cite{Benjamin}, it has only $n$ stable matchings, arranged in a single chain. It is
therefore a natural chain-lattice benchmark inside the Latin-marriage family.

\subsection*{What is new in this version}
Earlier versions of this manuscript proved that $C_n$ has exactly $n$ stable matchings,
introduced regular anti-phase templates $P(G,A)$ over an arbitrary finite group, and proved a
sufficient condition — the \emph{generator criterion} — for the canonical matchings to exhaust
the stable set. The present version adds the following.

\begin{enumerate}
\item[(A)] \textbf{A characterization of the templates} (Theorem~\ref{thm:antiphase-char}).
Among all automorphism-maximal size-$n$ profiles, written in the regular normal form
$P(G,A,B)$, the constant rank-sum identity $\rM+\rW\equiv n+1$ holds if and only if
$B=(a_{n-1}^{-1},\dots,a_0^{-1})$, i.e.\ if and only if the profile is an anti-phase template
$P(G,A)$. This removes the appearance of arbitrariness from the definition: the anti-phase
choice is not one convention among many, but the unique one carrying the constant rank sum.

\item[(B)] \textbf{A coset lemma} (Lemma~\ref{lem:coset}) isolating the mechanism behind the
generator criterion: for a stable matching with index function $d$ and maximal index
$b=\max_g d_g$, the level set $\{g : d_g=b\}$ is a union of left cosets of
$\langle q_b\rangle$, where $q_b=a_{b-1}a_b^{-1}$.

\item[(C)] \textbf{An exact converse of the generator criterion}
(Theorem~\ref{thm:converse}), which settles Problem~3 of the previous version's list of open
problems. If some $q_b$ fails to generate $G$, we construct explicitly a family of
non-canonical stable matchings, one for each proper nonempty union of left cosets of
$\langle q_b\rangle$.

\item[(D)] \textbf{A counting bound} (Theorem~\ref{thm:count}), which addresses Problem~4 of
that list:
\[
  |\Stab(P(G,A))| \;\ge\; n + \sum_{b=1}^{n-1}\Bigl(2^{[G:\langle q_b\rangle]}-2\Bigr).
\]
It is sharp for all groups of order $\le 5$ and for both groups of order $4$. For the Klein
four-group template it gives $4+3(2^2-2)=10$, matching the classical maximum $f(4)=10$ and
upgrading the previous version's brute-force observation to a structural lower bound.

\item[(E)] \textbf{A correction.} The previous version suggested that cyclicity is what forces
the chain lattice (``if $G$ is non-cyclic, no single adjacent quotient can generate $G$''). That
is true but incomplete, because it silently fixes the natural ordering $A=(0,1,\dots,n-1)$. The
correct statement is governed by \emph{primality}: for $|G|=p$ prime, every ordering yields
exactly $p$ stable matchings (Corollary~\ref{cor:prime}), whereas for composite order the count
depends on $A$ even for cyclic $G$ — the template $P(\mathbb{Z}_4,(0,2,1,3))$ has $8$ stable
matchings (Example~\ref{ex:z4}).
\end{enumerate}

The automorphism bound $|\Aut(S)|\le n$ (Proposition~\ref{prop:semireg}) is \emph{not} claimed
as new: it is the SMP specialization of the known semiregularity of stabilizers of preference
profiles, appearing for instance in the group-theoretic analysis of symmetric two-sided matching
mechanisms by Bubboloni, Gori and Meo~\cite{Bubboloni}. A short proof is included only to fix
notation.

\subsection*{Organization}
Section~\ref{sec:prelim} fixes definitions. Section~\ref{sec:cyclic} treats the cyclic profile
$C_n$: the Latin and constant rank-sum identities, the shift description of its stable
matchings, its chain lattice, and its sex-equality spectrum with an odd/even parity theorem.
Section~\ref{sec:aut} recalls semiregularity and the regular normal form, and proves the
characterization (A). Section~\ref{sec:templates} develops regular anti-phase templates and
proves (B), (C), (D), (E). Section~\ref{sec:products} records a product convention.
Section~\ref{sec:comp} gives script-backed computations, including a census over orderings.
Section~\ref{sec:open} lists the problems that remain.

\section{Preliminaries}\label{sec:prelim}

Let $M=\{m_0,\dots,m_{n-1}\}$ and $W=\{w_0,\dots,w_{n-1}\}$. Each participant has a strict
total order over all participants on the opposite side; profiles are always \emph{strict and
complete}. A matching is a bijection $\mu:M\to W$. A pair $(m,w)$ \emph{blocks} $\mu$ if $m$
prefers $w$ to $\mu(m)$ and $w$ prefers $m$ to $\mu^{-1}(w)$; $\mu$ is \emph{stable} if no pair
blocks it. The set of stable matchings of a profile $S$ is $\Stab(S)$.

We write $\rM(m,w)$ for the rank of $w$ in $m$'s list (so the top choice has rank $1$) and
$\rW(w,m)$ for the rank of $m$ in $w$'s list. The men-dominance order is $\mu\succeq_M\nu$ if
every man weakly prefers $\mu$; with it, $\Stab(S)$ is a finite distributive lattice
$L(S)$~\cite{Gusfield}. Its rotation poset is $R(S)$, and
$L(S)\cong J(R(S))$, the lattice of order ideals~\cite{Birkhoff,Gusfield}. Set
\[
  \EM(\mu)=\sum_{m\in M}\rM(m,\mu(m)),\qquad
  \EW(\mu)=\sum_{w\in W}\rW(w,\mu^{-1}(w)).
\]
The egalitarian score is $\EM+\EW$~\cite{Irving}; the \emph{sex-equality imbalance} is
$\Delta(\mu)=\EM(\mu)-\EW(\mu)$, and the sex-equal objective minimizes $|\Delta|$ over
$\Stab(S)$~\cite{Kato,McDermid}.

A profile is \emph{rank-Latin} if both rank matrices $(\rM(m_i,w_j))_{i,j}$ and
$(\rW(w_j,m_i))_{j,i}$ are Latin squares.

\section{The circulant anti-phase profile}\label{sec:cyclic}

Throughout this section subscripts are modulo $n$ and $C_n$ is as in the introduction. Thus
$w_{i+d}$ has rank $d+1$ in $m_i$'s list for $d\in\{0,\dots,n-1\}$; in $w_j$'s list, $m_{j+c}$
has rank $c$ for $c=1,\dots,n-1$, and $m_j$ has rank $n$.

\begin{proposition}[Rank-Latin and anti-phase identities]\label{prop:cyclic-latin}
The profile $C_n$ satisfies:
\begin{enumerate}
\item[(a)] both rank matrices are circulant Latin squares;
\item[(b)] $\rM(m_i,w_{i+d})+\rW(w_{i+d},m_i)=n+1$ for every pair;
\item[(c)] $\EM(\mu)+\EW(\mu)=n(n+1)$ for every perfect matching $\mu$.
\end{enumerate}
\end{proposition}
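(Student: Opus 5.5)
The plan is to compute both rank functions explicitly in terms of the index difference, and then read off all three parts. Fix a pair $(m_i,w_j)$ and write $d=j-i \pmod n$ with $d\in\{0,\dots,n-1\}$. From the definition of $C_n$, man $m_i$'s list is $w_i,w_{i+1},\dots,w_{i-1}$, so
\[
\rM(m_i,w_{i+d})=d+1 .
\]
Woman $w_j$'s list is $m_{j+1},m_{j+2},\dots,m_{j+n-1},m_j$, so $m_{j+c}$ has rank $c$ for $1\le c\le n-1$ and $m_j$ has rank $n$. Equivalently, $\rW(w_j,m_{j+c})$ equals the representative of $c \bmod n$ in $\{1,\dots,n\}$. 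I will use these two formulas for everything that follows.

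For (a), note that $\rM(m_i,w_j)$ depends only on $j-i \bmod n$. It is the bijection $d\mapsto d+1$ from $\mathbb{Z}_n$ onto $\{1,\dots,n\}$, so the man matrix is circulant: each row is the cyclic shift of the previous one. Because the map is a bijection, every row and every column contains each symbol exactly once, and the matrix is Latin. The same argument applies to $(\rW(w_j,m_i))_{j,i}$, since its entries depend only on $c=i-j \bmod n$ through a bijection $\mathbb{Z}_n\to\{1,\dots,n\}$.

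For (b), I will express the woman's rank in terms of the same $d$. Taking $i=j-d$ gives $c=-d$. If $d=0$, then $c=0$ and the woman's rank is $n$, so the sum is $1+n$. If $1\le d\le n-1$, then $c=n-d\in\{1,\dots,n-1\}$ and the woman's rank is $n-d$, so the sum is $(d+1)+(n-d)=n+1$. This case split at $d=0$ is the only spot that needs care, and I expect it to be the main (if minor) obstacle: the wrap-around convention that $m_j$ sits last in $w_j$'s list is exactly what makes the identity hold uniformly.

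For (c), take any perfect matching $\mu$. Summing (b) over the $n$ matched pairs $(m,\mu(m))$ gives
\[
\EM(\mu)+\EW(\mu)=\sum_{m}\bigl(\rM(m,\mu(m))+\rW(\mu(m),m)\bigr)=n(n+1).
\]
Here I use that $\EW$ re-indexes the same set of pairs through $\mu^{-1}$, since $\mu$ is a bijection.
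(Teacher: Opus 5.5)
Your proposal is correct and follows essentially the same route as the paper: it computes $\rM(m_i,w_{i+d})=d+1$, and $\rW=n-d$ for $d\ge1$ with the wrap-around case $d=0$ giving rank $n$. It then reads off the Latin/circulant structure from cyclic shifts and sums the constant rank sum $n+1$ over the $n$ matched pairs to get (c).
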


\begin{proof}
We have $\rM(m_i,w_{i+d})=d+1$. For $d=0$, woman $w_i$ ranks $m_i$ last, at rank $n$. For
$d\ge 1$, woman $w_{i+d}$ sees $m_i$ at relative position $-d \pmod n$, hence at rank $n-d$.
The rank sum is $(d+1)+(n-d)=n+1$, valid also at $d=0$. Rows and columns of each rank matrix
are cyclic shifts of $1,\dots,n$, so both are Latin. Summing (b) over the $n$ matched pairs of
any perfect matching gives (c).
\end{proof}

\begin{remark}
Latin stable marriage instances were studied by Benjamin et al.~\cite{Benjamin}, and
Latin-related profiles appear in later enumerative and legal-assignment
contexts~\cite{Borodin,Faenza}. The profile $C_n$ is a minimal chain-lattice benchmark inside
that family.
\end{remark}

For $k=0,\dots,n-1$ define the \emph{shift matching} $\mu_k(m_i)=w_{i+k}$.

\begin{proposition}[Shift description]\label{prop:shift}
For every $n\ge 1$, $\Stab(C_n)=\{\mu_0,\dots,\mu_{n-1}\}$; in particular $|\Stab(C_n)|=n$.
\end{proposition}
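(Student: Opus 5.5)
We need to prove that the stable matchings of $C_n$ are exactly the $n$ shift matchings. The proof has two parts: each $\mu_k$ is stable, and every stable matching is a shift.

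\emph{Stability of the shifts.} Fix $k$ and suppose $(m_i,w_j)$ blocks $\mu_k$. Write $j=i+d$ with $d\in\{0,\dots,n-1\}$. Man $m_i$ prefers $w_{i+d}$ to $w_{i+k}$, so $d<k$; in particular $k\ge 1$. Woman $w_j$ is matched to $m_{j-k}$. In her list, $m_{j+c}$ has rank $c$ for $1\le c\le n-1$ and $m_j$ has rank $n$, so $m_{j-k}$ has rank $n-k$ and $m_i=m_{j-d}$ has rank $n-d$ (rank $n$ when $d=0$). She prefers $m_i$ only if $n-d<n-k$, i.e.\ $d>k$. This contradicts $d<k$, so no blocking pair exists. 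Equivalently, by Proposition~\ref{prop:cyclic-latin}(b) each party's rank is determined by the offset $d$, with the man improving exactly when the woman worsens.

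\emph{Every stable matching is a shift.} Let $\mu$ be stable and write $\mu(m_i)=w_{i+d_i}$ with $d_i\in\{0,\dots,n-1\}$. Let $b=\max_i d_i$ and choose $i$ with $d_i=b$. We aim to show that $d_{i-1}=b$ as well; iterating then gives that $d$ is constant, so $\mu=\mu_b$.

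Consider the pair $(m_i,w_{i+b-1})$. Assume $b\ge 1$; if $b=0$ then $\mu=\mu_0$ and we are done. Man $m_i$ prefers $w_{i+b-1}$ (offset $b-1$) to his partner (offset $b$). Let $w_{i+b-1}$ be matched to $m_{i'}$ with $i'+d_{i'}=i+b-1$. By the rank computation above, she prefers $m_i$ (offset $b-1$) to $m_{i'}$ (offset $d_{i'}$) iff $b-1>d_{i'}$. Stability therefore forces $d_{i'}\ge b-1$, and since $i'\ne i$ the offset cannot produce the same woman as $m_i$'s partner, so $d_{i'}\le b$. If $d_{i'}=b$, then $i'=i-1$, which gives $d_{i-1}=b$ as wanted. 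The remaining case is $d_{i'}=b-1$, which gives $i'=i$, a contradiction because $m_i$ is matched to $w_{i+b}$.

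Hence $d_{i-1}=b$, and by induction around the cycle $d_j=b$ for all $j$. This is exactly the $q_b=1$ generator step, specialised from the coset lemma to $\mathbb{Z}_n$ with $q_b=a_{b-1}a_b^{-1}=-1$, which generates the group. The main obstacle is the offset bookkeeping in this step: we must use the maximality of $b$ to rule out the woman's partner having a larger offset, and handle the wrap-around at $d=0$, where the woman's rank is $n$ rather than $0$. Both are covered by restricting to $b\ge1$ and using the formula rank $=n-d$, which is valid uniformly for $d\in\{0,\dots,n-1\}$. The count $|\Stab(C_n)|=n$ follows since the $\mu_k$ are distinct.
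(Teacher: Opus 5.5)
Your proof is correct and follows the paper's route. The paper obtains the result as the case $G=\mathbb{Z}_n$, $A=(0,1,\dots,n-1)$ of Theorem~\ref{thm:gencrit}, and your argument is that proof written additively: stability of the shifts, plus the coset-lemma step with $q_b=-1$, where your $i'=i-1$ is the paper's $x=gq_b$. One small slip: the bound $d_{i'}\le b$ follows directly from the maximality of $b$, not from $i'\ne i$, though you do invoke maximality correctly later.
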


\begin{proof}
This is the special case $G=\mathbb{Z}_n$, $A=(0,1,\dots,n-1)$ of Theorem~\ref{thm:gencrit}
below, via Corollary~\ref{cor:cyclic-extreme}. (A direct proof is also easy: it is the argument
of Theorem~\ref{thm:gencrit} written additively.)
\end{proof}

\begin{corollary}\label{cor:chain}
The stable lattice $L(C_n)$ is the $n$-element chain
$\mu_0\succ_M\mu_1\succ_M\cdots\succ_M\mu_{n-1}$, and the rotation poset $R(C_n)$ is an
$(n-1)$-element chain.
\end{corollary}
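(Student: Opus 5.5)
The plan is to use Proposition~\ref{prop:shift}, which gives $\Stab(C_n)=\{\mu_0,\dots,\mu_{n-1}\}$, so it only remains to order these $n$ matchings and read off the rotation poset. Since $\rM(m_i,\mu_k(m_i))=\rM(m_i,w_{i+k})=k+1$, every man is matched to his $(k+1)$-st choice under $\mu_k$. Hence for $k<l$ every man strictly prefers $\mu_k$ to $\mu_l$, which gives $\mu_k\succ_M\mu_l$. Any two stable matchings are therefore comparable, and $L(C_n)$ is the chain $\mu_0\succ_M\mu_1\succ_M\cdots\succ_M\mu_{n-1}$. As a check, $\mu_0$ gives every man his first choice, so it is the man-optimal matching. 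By Proposition~\ref{prop:cyclic-latin}(b), $\mu_{n-1}$ gives every woman rank $n+1-n=1$, so it is the woman-optimal matching, consistent with the chain's extremes.

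For the rotation poset, I would invoke the representation $L(S)\cong J(R(S))$ recalled in Section~\ref{sec:prelim}. A finite distributive lattice is a chain exactly when its poset of join-irreducibles is a chain, since $J(P)$ is totally ordered iff $P$ is. Moreover $|J(P)|=|P|+1$ when $P$ is a chain. So an $n$-element chain $L(C_n)$ forces $R(C_n)$ to be a chain with $n-1$ elements.

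To make this concrete rather than purely abstract, I would also identify the rotations directly. Passing from $\mu_k$ to $\mu_{k+1}$ moves every man $m_i$ from $w_{i+k}$ to $w_{i+k+1}$, which is a single rotation $\rho_k$ of length $n$ exposed in $\mu_k$. Here I would check that $w_{i+k+1}$ is the next woman on $m_i$'s list who prefers $m_i$ to her current partner $m_{i+1}$. Since $\rho_{k+1}$ is exposed only after $\rho_k$ is eliminated, we get $\rho_0\prec\rho_1\prec\cdots\prec\rho_{n-2}$.

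The only delicate point is matching conventions: which lattice orientation the paper uses, and whether $J(R)$ counts the empty ideal. The chain length should come out as $n-1$ under either convention, so I do not expect a real obstacle.
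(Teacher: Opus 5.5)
Your proposal is correct and matches the paper's proof: under $\mu_k$ every man gets rank $k+1$, which orders $\Stab(C_n)$ as the displayed chain, and $L(C_n)\cong J(R(C_n))$ then forces $R(C_n)$ to be an $(n-1)$-element chain, because the ideal lattice of a $k$-element chain has $k+1$ elements. Your explicit identification of the rotation $\rho_k=\bigl((m_0,w_k),\dots,(m_{n-1},w_{k+n-1})\bigr)$ is a correct extra check that the paper does not need; it holds because $w_{i+k+1}$ ranks $m_i$ at $n-k-1$, ahead of her partner $m_{i+1}$ at $n-k$.
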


\begin{proof}
Under $\mu_k$ every man receives rank $k+1$, so the men-dominance order is the displayed chain.
The rotation statement follows from $L(C_n)\cong J(R(C_n))$: the chain with $n$ elements is the
ideal lattice of the chain with $n-1$ elements.
\end{proof}

\begin{corollary}
For $n\ge 2$, $L(C_n)$ is directly indecomposable as a nontrivial product of finite
distributive lattices.
\end{corollary}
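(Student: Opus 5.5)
The plan is to reduce the statement to Corollary~\ref{cor:chain} together with a structural fact about finite distributive lattices: a nontrivial direct product of finite lattices is never a chain. By Corollary~\ref{cor:chain}, $L(C_n)$ is the $n$-element chain $\mu_0\succ_M\cdots\succ_M\mu_{n-1}$, and for $n\ge 2$ it has at least two elements. So it suffices to prove that a finite chain with at least two elements cannot be written as $L_1\times L_2$ with $|L_1|,|L_2|\ge 2$.

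Suppose, for contradiction, that $L(C_n)\cong L_1\times L_2$ with both factors nontrivial. Choose elements $x_1<y_1$ in $L_1$ and $x_2<y_2$ in $L_2$; these exist because each factor is a nontrivial finite lattice, with distinct bottom and top. Then $(y_1,x_2)$ and $(x_1,y_2)$ are incomparable in the product order. Indeed, $(y_1,x_2)\le(x_1,y_2)$ would force $y_1\le x_1$, and the reverse inequality would force $y_2\le x_2$. But every two elements of a chain are comparable, and an order isomorphism preserves comparability. This is a contradiction, so no such decomposition exists.

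A cross-check comes from the rotation-poset side. Via Birkhoff, $J(R_1)\times J(R_2)\cong J(R_1\sqcup R_2)$, so a product decomposition corresponds to splitting $R(C_n)$ into a disjoint union of two nonempty, mutually incomparable parts. The $(n-1)$-element chain $R(C_n)$ from Corollary~\ref{cor:chain} is connected, so no such splitting exists for $n\ge 2$. One boundary case needs care. For $n=2$ the chain has two elements, and any factorization of it must have a factor of size $1$, so it is trivially indecomposable. The incomparability argument already covers this case, since a nontrivial product has at least four elements.

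There is no real obstacle. The only point requiring care is the convention that ``nontrivial product'' means both factors have at least two elements.
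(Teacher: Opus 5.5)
Your proposal is correct and follows the paper's proof: reduce to Corollary~\ref{cor:chain}, then note that any nontrivial product $L_1\times L_2$ contains the incomparable pair $(y_1,x_2)$, $(x_1,y_2)$, so it cannot be a chain. The rotation-poset cross-check via $J(R_1)\times J(R_2)\cong J(R_1\sqcup R_2)$ is valid but not needed.
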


\begin{proof}
A nontrivial product $L_1\times L_2$ contains the incomparable elements $(a_1,b_0)$ and
$(a_0,b_1)$ whenever $a_0<a_1$ in $L_1$ and $b_0<b_1$ in $L_2$; hence it is not a chain.
\end{proof}

\begin{proposition}[Rank-energy spectrum]\label{prop:spectrum}
For $k=0,\dots,n-1$,
\[
  \EM(\mu_k)=n(k+1),\qquad \EW(\mu_k)=n(n-k),\qquad \Delta(\mu_k)=n(2k+1-n).
\]
The imbalances form the arithmetic progression
$-n(n-1),\,-n(n-3),\,\dots,\,n(n-3),\,n(n-1)$ with common difference $2n$.
\end{proposition}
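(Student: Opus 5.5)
The plan is a direct computation from the rank formulas in the proof of Proposition~\ref{prop:cyclic-latin}. Under $\mu_k$, each man $m_i$ is matched to $w_{i+k}$. The rank of $w_{i+k}$ in $m_i$'s list is $d+1$ with $d=k$. Since this holds for all $n$ men, we get $\EM(\mu_k)=n(k+1)$ immediately.

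For the women's side, I will not recompute women's ranks separately. Instead I invoke part (b) of Proposition~\ref{prop:cyclic-latin}: every matched pair $(m_i,w_{i+k})$ has $\rW(w_{i+k},m_i)=n+1-(k+1)=n-k$. This is valid uniformly, including the case $k=0$ where the woman ranks her partner last. Each woman $w_j$ is matched under $\mu_k$ to exactly one man $m_{j-k}$, so summing over women gives $\EW(\mu_k)=n(n-k)$. Equivalently, use part (c): $\EW=n(n+1)-\EM=n(n+1)-n(k+1)=n(n-k)$.

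The imbalance is then $\Delta(\mu_k)=n(k+1)-n(n-k)=n(2k+1-n)$. This is affine in $k$ with slope $2n$. Evaluating at $k=0$ gives $-n(n-1)$, at $k=1$ gives $-n(n-3)$, at $k=n-2$ gives $n(n-3)$, and at $k=n-1$ gives $n(n-1)$. So as $k$ runs over $0,\dots,n-1$, the values form exactly the stated arithmetic progression with common difference $2n$.

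There is no real obstacle here. The only point needing care is the woman's rank at $d=0$, which part (b) of Proposition~\ref{prop:cyclic-latin} already handles uniformly.
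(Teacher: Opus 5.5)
Your proposal is correct and follows essentially the same route as the paper: read off $\EM(\mu_k)=n(k+1)$ from the men's ranks, then get $\EW(\mu_k)=n(n+1)-n(k+1)=n(n-k)$ from Proposition~\ref{prop:cyclic-latin}(c), from which the imbalance formula and the arithmetic progression follow. Your alternative via part (b), summing the women's ranks pair by pair, is just the unsummed form of the same identity.
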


\begin{proof}
Under $\mu_k$ every man receives rank $k+1$, so $\EM(\mu_k)=n(k+1)$; by
Proposition~\ref{prop:cyclic-latin}(c), $\EW(\mu_k)=n(n+1)-n(k+1)=n(n-k)$, and the imbalance
formula follows.
\end{proof}

\begin{theorem}[Odd/even sex-equality theorem]\label{thm:parity}
On $\Stab(C_n)$:
\begin{enumerate}
\item[(a)] if $n$ is odd, there is a unique exactly sex-equal stable matching, namely
$\mu_{(n-1)/2}$;
\item[(b)] if $n$ is even, there is no exactly sex-equal stable matching, and the two closest
ones are $\mu_{n/2-1}$ and $\mu_{n/2}$, with imbalances $-n$ and $+n$.
\end{enumerate}
\end{theorem}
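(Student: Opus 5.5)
The plan is to read the theorem off the shift description and the rank-energy spectrum. By Proposition~\ref{prop:shift}, every stable matching of $C_n$ is one of $\mu_0,\dots,\mu_{n-1}$. By Proposition~\ref{prop:spectrum}, their imbalances are $\Delta(\mu_k)=n(2k+1-n)$. So the whole statement reduces to studying the integer function $k\mapsto 2k+1-n$ on $\{0,\dots,n-1\}$. Since $n\ge1$, we have $\Delta(\mu_k)=0$ exactly when $2k+1=n$. Moreover, $|\Delta(\mu_k)|=n\,|2k+1-n|$, so ordering the $\mu_k$ by closeness to sex-equality is the same as ordering them by $|2k+1-n|$.

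For (a), with $n$ odd, the equation $2k+1=n$ has the unique solution $k=(n-1)/2$, and this $k$ lies in $\{0,\dots,n-1\}$. The matchings $\mu_k$ are pairwise distinct for distinct $k$, since $\mu_k(m_0)=w_k$. Hence $\mu_{(n-1)/2}$ is the unique exactly sex-equal stable matching.

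For (b), with $n$ even, the quantity $2k+1-n$ is odd, so it is never zero and no stable matching is exactly sex-equal. The smallest possible value of $|2k+1-n|$ is $1$. It is attained exactly when $2k+1-n=\pm1$, that is, at $k=n/2-1$ and $k=n/2$, giving imbalances $-n$ and $+n$ respectively. Both indices lie in range, because $n\ge2$ gives $n/2-1\ge0$. Every other $k$ satisfies $|2k+1-n|\ge3$, so these two matchings are the two closest.

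There is essentially no obstacle here. The only point needing care is that the argument depends on Proposition~\ref{prop:shift} for completeness of the list of stable matchings, and that proposition is deferred to Theorem~\ref{thm:gencrit}. We therefore cite it as stated rather than reprove it.
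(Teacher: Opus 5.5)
Your proposal is correct and follows essentially the same route as the paper: it restricts to the shifts $\mu_0,\dots,\mu_{n-1}$ via Proposition~\ref{prop:shift}, uses $\Delta(\mu_k)=n(2k+1-n)$ from Proposition~\ref{prop:spectrum}, and reads off both parts from the parity of $2k+1$. Your added checks (distinctness of the $\mu_k$, the indices lying in range, and $|2k+1-n|\ge 3$ for all other $k$) make explicit what the paper's shorter argument leaves implicit.
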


\begin{proof}
Exact sex-equality means $n(2k+1-n)=0$, i.e.\ $2k+1=n$. The left-hand side is odd, so a
solution exists precisely when $n$ is odd, and then $k=(n-1)/2$ is unique. If $n$ is even, the
nearest odd integers to $n$ are $n\mp1$, giving $k=n/2-1$ and $k=n/2$, with imbalances
$n(2(n/2-1)+1-n)=-n$ and $n(2(n/2)+1-n)=+n$.
\end{proof}

Theorem~\ref{thm:parity}(b) is a genuine obstruction for the \emph{cyclic} profile only; see
Observation~\ref{obs:klein}, where a non-cyclic template of the same size $n=4$ does possess
exactly sex-equal stable matchings.

\section{Automorphisms, regular normal form, and what ``anti-phase'' means}\label{sec:aut}

An \emph{automorphism} of a profile $S$ is a pair $(\sigma,\tau)$ of permutations of $M$ and
$W$ preserving all ranks:
\[
  \rM(m_i,w_j)=\rM(m_{\sigma(i)},w_{\tau(j)}),\qquad
  \rW(w_j,m_i)=\rW(w_{\tau(j)},m_{\sigma(i)}).
\]

\begin{lemma}[Rigidity]\label{lem:rigid}
If an automorphism of a strict complete profile fixes one man, it is the identity.
\end{lemma}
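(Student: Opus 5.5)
The plan is to propagate the fixed point along preference lists, alternating between men and women, using only that an automorphism $(\sigma,\tau)$ preserves ranks and that the profile is strict and complete. Suppose $\sigma(i)=i$, i.e.\ the man $m_i$ is fixed. For each woman $w_j$, the rank condition gives $\rM(m_i,w_j)=\rM(m_i,w_{\tau(j)})$. Strictness means $m_i$'s rank map $j\mapsto \rM(m_i,w_j)$ is injective, and completeness means it is defined on all of $W$. Hence $\tau(j)=j$ for every $j$, so $\tau$ is the identity.

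Next we transfer this back to the men. Since $\tau=\mathrm{id}$, each woman $w_0$ is fixed, and the second rank condition reads $\rW(w_0,m_k)=\rW(w_0,m_{\sigma(k)})$ for all $k$. Injectivity of $w_0$'s rank function on $M$, again by strictness and completeness, forces $\sigma(k)=k$ for all $k$. Therefore $(\sigma,\tau)$ is the identity.

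No step is a real obstacle. The only point needing care is that completeness is used twice: it makes each rank function defined on the entire opposite side, so a single fixed agent pins down the whole other side. Completeness also guarantees $W\neq\emptyset$ when $n\ge1$, so there is a woman through whom we can return to the men. For $n=0$ the statement is vacuous. By the symmetric argument, the same conclusion holds if the automorphism fixes one woman instead of one man.
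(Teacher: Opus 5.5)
Your proof is correct and is essentially the paper's own argument: preservation of the fixed man's strict complete list forces $\tau=\mathrm{id}$, and then preservation of any woman's strict complete list forces $\sigma=\mathrm{id}$. You simply spell out the injectivity of the rank maps that the paper leaves implicit.
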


\begin{proof}
Suppose $(\sigma,\tau)$ fixes $m_i$. Preservation of $m_i$'s strict complete list forces
$\tau$ to fix every woman. Preservation of the women's strict complete lists then forces
$\sigma$ to fix every man.
\end{proof}

\begin{proposition}[Semiregularity bound]\label{prop:semireg}
For every strict complete size-$n$ profile $S$, $\Aut(S)$ acts freely on each side. Hence
$|\Aut(S)|$ divides $n$; in particular $|\Aut(S)|\le n$.
\end{proposition}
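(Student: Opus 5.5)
The plan is to combine Lemma~\ref{lem:rigid} with the orbit--stabilizer theorem. An automorphism is a pair $(\sigma,\tau)$. Composition is componentwise, so $\Aut(S)$ is a group. It acts on $M$ through the first coordinate and on $W$ through the second.

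\emph{Freeness on the men's side.} Suppose $(\sigma,\tau)\in\Aut(S)$ fixes some man $m_i$, that is, $\sigma(i)=i$. Lemma~\ref{lem:rigid} then gives $(\sigma,\tau)=(\mathrm{id},\mathrm{id})$. So the stabilizer in $\Aut(S)$ of every man is trivial, and the action on $M$ is free.

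\emph{Freeness on the women's side.} Here I would run the mirror-image argument, which is the only step needing a little care, since Lemma~\ref{lem:rigid} is stated for a fixed man. Suppose $\tau$ fixes $w_j$. Rank preservation gives $\rW(w_j,m_i)=\rW(w_j,m_{\sigma(i)})$ for all $i$. Because $w_j$'s list is strict and complete, each rank value determines a unique man, so $\sigma(i)=i$ for all $i$. Then $\sigma$ fixes a man, and Lemma~\ref{lem:rigid} applies again to give $\tau=\mathrm{id}$. (Equivalently, one may note that the lemma is symmetric under swapping the roles of the two sides.)

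\emph{Counting.} Since the action on $M$ is free, every orbit of $\Aut(S)$ on $M$ has size $|\Aut(S)|$ by orbit--stabilizer. These orbits partition the $n$-element set $M$. Hence $|\Aut(S)|$ divides $n$, and in particular $|\Aut(S)|\le n$. The same count on $W$ gives the same conclusion. I expect no real obstacle: the entire content is Lemma~\ref{lem:rigid} together with its mirror version for women.
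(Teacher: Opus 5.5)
Your proof is correct and follows the paper's argument: freeness on the men's side comes from Lemma~\ref{lem:rigid}, and orbit--stabilizer then shows that $|\Aut(S)|$ divides $n$. For the women's side, the paper simply says the argument is identical. Your explicit version (a fixed woman forces $\sigma=\mathrm{id}$ by strictness of her list, and then the lemma applies) spells out that step.
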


\begin{proof}
Freeness on men is Lemma~\ref{lem:rigid}. Every orbit on men therefore has size $|\Aut(S)|$,
so $|\Aut(S)|$ divides $n$. The argument on women is identical.
\end{proof}

\begin{remark}
This is a profile-level specialization of the semiregularity of stabilizers studied in
group-theoretic work on symmetric two-sided matching mechanisms; see, e.g.,
Bubboloni et al.~\cite{Bubboloni}. It is included only for self-contained notation.
\end{remark}

\begin{definition}
A strict complete size-$n$ profile is \emph{automorphism-maximal} if $|\Aut(S)|=n$.
\end{definition}

\begin{theorem}[Regular normal form]\label{thm:normalform}
A strict complete size-$n$ profile is automorphism-maximal if and only if it is isomorphic to a
profile of the following shape. There are a group $G$ of order $n$ and two orderings
$A=(a_0,\dots,a_{n-1})$ and $B=(b_0,\dots,b_{n-1})$ of the elements of $G$ such that, indexing
men and women by $G$,
\[
  m_g:\; w_{ga_0}\succ w_{ga_1}\succ\cdots\succ w_{ga_{n-1}},
  \qquad
  w_h:\; m_{hb_0}\succ m_{hb_1}\succ\cdots\succ m_{hb_{n-1}} .
\]
We denote this profile $P(G,A,B)$. The group $G$ acts by left translation, and this action is
the full automorphism group.
\end{theorem}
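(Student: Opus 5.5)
We prove both directions separately, using Proposition~\ref{prop:semireg} (freeness of $\Aut(S)$ on each side) and Lemma~\ref{lem:rigid} (an automorphism is determined by the image of one man).

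\emph{The profile $P(G,A,B)$ is automorphism-maximal.} For $x\in G$, define $\sigma_x(m_g)=m_{xg}$ and $\tau_x(w_h)=w_{xh}$. Man $m_{xg}$ ranks $w_{xga_i}=\tau_x(w_{ga_i})$ in position $i+1$. This is exactly the position of $w_{ga_i}$ in $m_g$'s list, so men's ranks are preserved. The same computation with $B$ shows that women's ranks are preserved. Distinct $x$ give distinct pairs $(\sigma_x,\tau_x)$, so $|\Aut|\ge n$. Proposition~\ref{prop:semireg} gives $|\Aut|\le n$, hence equality. It also follows that left translation is the full automorphism group, since it already supplies $n$ automorphisms.

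\emph{An automorphism-maximal profile has the regular normal form.} Let $S$ satisfy $|\Aut(S)|=n$ and put $G=\Aut(S)$. By freeness, every orbit on men has size $|G|=n$, so $G$ acts regularly on $M$; likewise it acts regularly on $W$.

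1. Fix a base man $m_*$ and a base woman $w_*$. Index the sides by $m_g:=g\cdot m_*$ and $w_h:=h\cdot w_*$. This is a bijection $G\to M$ and a bijection $G\to W$ by regularity.
2. The automorphism $x$ now sends $m_g\mapsto m_{xg}$ and $w_h\mapsto w_{xh}$, so $G$ acts by left translation.
3. Let $m_e$ have list $w_{a_0}\succ\cdots\succ w_{a_{n-1}}$, and set $A=(a_0,\dots,a_{n-1})$. Applying the automorphism $g$ carries $m_e$'s list to $m_g$'s list, and rank preservation gives $m_g: w_{ga_0}\succ\cdots\succ w_{ga_{n-1}}$.
4. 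Symmetrically, define $B$ from $w_e$'s list $m_{b_0}\succ\cdots\succ m_{b_{n-1}}$. The same argument gives $w_h: m_{hb_0}\succ\cdots\succ m_{hb_{n-1}}$.

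This exhibits the claimed isomorphism $S\cong P(G,A,B)$.

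\emph{Expected main obstacle.} The only delicate point is the bookkeeping of left versus right multiplication in step~3. Rank preservation means that the rank of $w_{ga}$ in $m_g$'s list equals the rank of $w_a$ in $m_e$'s list. One must check that the image of $w_a$ under the automorphism $g$ is $w_{ga}$. This holds because $g\cdot(a\cdot w_*)=(ga)\cdot w_*$, so the positions multiply on the left, consistent with the stated shape. Everything else follows directly from freeness and the orbit count.
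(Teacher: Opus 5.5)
Your proposal is correct and follows the paper's proof essentially step for step. Both arguments use Proposition~\ref{prop:semireg} to get a regular action of $\Aut(S)$ on each side, label men and women by the orbits of a base man and a base woman, transport the base lists by left translation, and for the converse exhibit the $n$ left translations and invoke $|\Aut(S)|\le n$; your extra bookkeeping (distinctness of the translations and the identity $g\cdot(a\cdot w_*)=(ga)\cdot w_*$) only makes explicit what the paper leaves implicit.
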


\begin{proof}
If $|\Aut(S)|=n$ then, by Proposition~\ref{prop:semireg}, the action on men is free and
transitive. Identify $G=\Aut(S)$ with $M$ by choosing a base man $m_e$ and setting
$m_g=g(m_e)$, and likewise identify $W$ with $G$ via a base woman $w_e$. The base man's list has
the form $w_{a_0}\succ\cdots\succ w_{a_{n-1}}$; applying $g$ gives
$m_g: w_{ga_0}\succ\cdots\succ w_{ga_{n-1}}$, and the women's lists are analogous. Conversely,
any profile of the displayed shape is preserved by every left translation of $G$, so it has at
least $n$ automorphisms; Proposition~\ref{prop:semireg} forces equality.
\end{proof}

\begin{corollary}\label{cor:autmax-latin}
Every automorphism-maximal strict complete profile is rank-Latin.
\end{corollary}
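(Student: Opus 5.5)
By Theorem~\ref{thm:normalform}, it suffices to treat a profile of the form $P(G,A,B)$. Being rank-Latin is preserved under relabelling men and women, because an isomorphism only permutes the rows and columns of the two rank matrices. So the plan is to read both rank matrices off the normal form and check the Latin property row by row and column by column.

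For the men's matrix, note that $\rM(m_g,w_h)=k+1$ exactly when $h=ga_k$, that is, when $g^{-1}h=a_k$. Fix a man $m_g$. As $h$ ranges over $G$, the element $g^{-1}h$ ranges over all of $G$, and $A$ lists every element of $G$ exactly once. Hence row $g$ contains every rank $1,\dots,n$ exactly once. Now fix a woman $w_h$. As $g$ ranges over $G$, the element $g^{-1}h$ again ranges bijectively over $G$, because $g\mapsto g^{-1}h$ is a bijection. So column $h$ also contains each rank exactly once, and $(\rM(m_g,w_h))$ is a Latin square.

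The women's matrix is handled the same way. We have $\rW(w_h,m_g)=k+1$ exactly when $g=hb_k$, that is, when $h^{-1}g=b_k$. Both maps $g\mapsto h^{-1}g$ (for fixed $h$) and $h\mapsto h^{-1}g$ (for fixed $g$) are bijections of $G$. Combined with the fact that $B$ lists every element of $G$ once, this makes every row and every column of the women's matrix a permutation of $1,\dots,n$.

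There is no real obstacle here. The only point needing care is the column condition: it uses that $g\mapsto g^{-1}h$ is a bijection of $G$, i.e.\ it relies on the group structure, not just on the freeness of the action. Rows alone would follow from each list being a strict complete order.
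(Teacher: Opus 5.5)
Your proof is correct and is essentially the paper's argument. The paper says that for each fixed rank $t$ the map $g\mapsto ga_t$ is a bijection of $G$, so the cells of rank $t+1$ meet every row and every column exactly once; this is the same fact as your observation that $g^{-1}h=a_k$ (respectively $h^{-1}g=b_k$) has exactly one solution in each row and in each column. Your explicit remark that rank-Latinness is preserved by relabelling, since relabelling only permutes rows and columns, fills in a step the paper leaves implicit.
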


\begin{proof}
In the normal form, for each fixed rank $t$ the map $g\mapsto ga_t$ is a bijection of $G$, so
each men-side rank occurs exactly once in every row and column. The women's side is identical.
\end{proof}

We can now say precisely what singles out the anti-phase templates among all
automorphism-maximal profiles. The following theorem is new, and it is the reason the
construction of Section~\ref{sec:templates} deserves to be called canonical.

\begin{theorem}[Anti-phase characterization]\label{thm:antiphase-char}
Let $P(G,A,B)$ be an automorphism-maximal profile in regular normal form. Then
\[
  \rM(m,w)+\rW(w,m)=n+1 \ \text{ for all } (m,w)
  \quad\Longleftrightarrow\quad
  b_s=a_{n-1-s}^{-1}\ \text{ for all } s .
\]
That is, among all $(n!)^2$ regular normal forms over a fixed group $G$, exactly the $n!$
choices with $B=(a_{n-1}^{-1},a_{n-2}^{-1},\dots,a_0^{-1})$ carry the constant rank-sum
identity; these are precisely the regular anti-phase templates $P(G,A)$ of
Definition~\ref{def:template}.
\end{theorem}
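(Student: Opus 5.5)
The plan is to compute both rank functions explicitly in the regular normal form and compare them pair by pair. Fix $m=m_g$ and $w=w_h$, and set $x=g^{-1}h\in G$.

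For the men's side, $w_h$ appears in $m_g$'s list at the position $t$ with $ga_t=h$, i.e.\ $a_t=x$. Hence
\[
  \rM(m_g,w_h)=\alpha(x)+1,
\]
where $\alpha:G\to\{0,\dots,n-1\}$ is the inverse of the bijection $t\mapsto a_t$. For the women's side, $m_g$ appears in $w_h$'s list at the position $s$ with $hb_s=g$, i.e.\ $b_s=h^{-1}g=x^{-1}$. Hence
\[
  \rW(w_h,m_g)=\beta(x^{-1})+1,
\]
where $\beta$ is the inverse of $s\mapsto b_s$. Both ranks therefore depend only on $x$. As $(g,h)$ ranges over $G\times G$, the element $x$ ranges over all of $G$; for instance, take $g=e$ and $h=x$. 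So the constant rank-sum identity is equivalent to
\[
  \alpha(x)+\beta(x^{-1})=n-1 \quad\text{for every } x\in G.
\]

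Next, I reparametrize by $t$. Substituting $x=a_t$, the condition becomes $\beta(a_t^{-1})=n-1-t$ for all $t$, i.e.\ $b_{n-1-t}=a_t^{-1}$. Writing $s=n-1-t$, this is exactly $b_s=a_{n-1-s}^{-1}$ for all $s$. Every step here is an equivalence, which gives both directions of the biconditional at once. No earlier results are needed beyond the normal form of Theorem~\ref{thm:normalform}.

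For the counting remark, I will note that $B$ is uniquely determined by $A$. Conversely, $s\mapsto a_{n-1-s}^{-1}$ is a genuine ordering of $G$, because inversion is a bijection of $G$. So each of the $n!$ orderings $A$ yields exactly one admissible $B$, and the admissible profiles are the $P(G,A)$ of Definition~\ref{def:template}. The ranking $w_h: m_{ha_{n-1}^{-1}}\succ\cdots\succ m_{ha_0^{-1}}$ matches the abstract's description directly.

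The only delicate point is bookkeeping. I need the correct side of multiplication, $hb_s=g$ rather than $b_sh$, and the correct index reversal, so that the inverse comes out as $x^{-1}=h^{-1}g$ and not $g^{-1}h$. I will also check that the reduction to a single variable $x$ really covers every element of $G$, so that the pointwise identity in $x$ is both necessary and sufficient.
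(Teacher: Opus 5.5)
Your proposal is correct and is essentially the paper's argument: the paper likewise writes $w=w_{ga_t}$, gets $\rM=t+1$ and $\rW=s(t)+1$ with $b_{s(t)}=a_t^{-1}$, notes that this is independent of $g$, and reads off $s(t)=n-1-t$. Your variable $x=g^{-1}h$ is simply $a_t$ in that parametrization, and your remark that $s\mapsto a_{n-1-s}^{-1}$ is an ordering of $G$ gives the $n!$ count.
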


\begin{proof}
In $P(G,A,B)$ we have $\rM(m_g,w_{ga_t})=t+1$ and $\rW(w_{hb_s},m_h)=s+1$; equivalently
$\rW(w_h,m_{hb_s})$... let us compute directly. Fix a pair $(m_g,w)$ and write $w=w_{ga_t}$,
which determines $t$ uniquely; then $\rM(m_g,w)=t+1$. Put $h=ga_t$. The woman $w_h$ ranks
$m_{hb_s}$ at rank $s+1$, so $\rW(w_h,m_g)=s+1$ where $s$ is the unique index with
$hb_s=g$, i.e.\ with
\[
  b_s = h^{-1}g = (ga_t)^{-1}g = a_t^{-1}.
\]
Thus $s=s(t)$ is determined by $b_{s(t)}=a_t^{-1}$, and
\[
  \rM(m_g,w)+\rW(w,m_g) = (t+1)+(s(t)+1) = t+s(t)+2 .
\]
Crucially this does not depend on $g$. Hence the rank sum is constantly $n+1$ if and only if
$t+s(t)+2=n+1$, i.e.\ $s(t)=n-1-t$, for every $t$; and by definition of $s(t)$ this says
exactly $b_{n-1-t}=a_t^{-1}$ for every $t$, which is the asserted condition after
re-indexing.
\end{proof}

\begin{remark}
Theorem~\ref{thm:antiphase-char} shows that the two structural features used throughout this
paper are independent coordinates: \emph{automorphism-maximality} pins down the normal form
$P(G,A,B)$ (Theorem~\ref{thm:normalform}) and already forces rank-Latinness
(Corollary~\ref{cor:autmax-latin}), while the \emph{constant rank sum} pins down $B$ in terms
of $A$. The anti-phase templates are exactly the intersection of the two conditions.
\end{remark}

\section{Regular anti-phase templates}\label{sec:templates}

\begin{definition}[Regular anti-phase template]\label{def:template}
Let $G$ be a finite group of order $n$ and $A=(a_0,\dots,a_{n-1})$ an ordering of its elements.
Define $P(G,A)\ :=\ P(G,A,B)$ with $b_s=a_{n-1-s}^{-1}$, i.e.
\[
  m_g:\; w_{ga_0}\succ w_{ga_1}\succ\cdots\succ w_{ga_{n-1}},
  \qquad
  w_h:\; m_{ha_{n-1}^{-1}}\succ m_{ha_{n-2}^{-1}}\succ\cdots\succ m_{ha_0^{-1}} .
\]
For $t=0,\dots,n-1$, the \emph{canonical matching} $\mu_t$ is $\mu_t(m_g)=w_{ga_t}$. The
\emph{adjacent quotients} of $A$ are
\[
  q_b \;=\; a_{b-1}a_b^{-1}, \qquad b=1,\dots,n-1 .
\]
\end{definition}

Note $q_b\ne e$ for every $b$, since $a_{b-1}\ne a_b$.

\begin{proposition}[Latin and anti-phase identities]\label{prop:tmpl-latin}
For every $G$ and $A$, the profile $P(G,A)$ is rank-Latin and satisfies
$\rM(m,w)+\rW(w,m)=n+1$ for every pair; consequently $\EM+\EW=n(n+1)$ for every perfect
matching.
\end{proposition}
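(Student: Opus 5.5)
This is a direct corollary of the results already established, and I would prove it by reducing to Corollary~\ref{cor:autmax-latin} and Theorem~\ref{thm:antiphase-char}.

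\emph{Rank-Latin.} By construction, $P(G,A)$ has the regular normal form $P(G,A,B)$ with $b_s=a_{n-1-s}^{-1}$. Since $A$ is an ordering of $G$, so is $B$. Every profile of the shape in Theorem~\ref{thm:normalform} is invariant under left translation by $G$, so it has $n$ automorphisms and is automorphism-maximal. Corollary~\ref{cor:autmax-latin} then gives rank-Latinness. Alternatively, one can check it directly: for fixed rank $t$, the maps $g\mapsto ga_t$ and $h\mapsto hb_t$ are bijections of $G$.

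\emph{Constant rank sum.} This is the direction ``$\Leftarrow$'' of Theorem~\ref{thm:antiphase-char}, applied with $b_s=a_{n-1-s}^{-1}$. For a self-contained check, take the pair $(m_g,w_{ga_t})$. Then $\rM=t+1$. The woman $w_h$ with $h=ga_t$ sees $m_g=m_{hb_s}$ exactly when $b_s=a_t^{-1}$, which forces $s=n-1-t$. Her rank for $m_g$ is therefore $n-t$, and the two ranks sum to $n+1$.

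\emph{Consequence.} Summing the identity over the $n$ pairs of any perfect matching $\mu$ gives $\EM(\mu)+\EW(\mu)=n(n+1)$, exactly as in Proposition~\ref{prop:cyclic-latin}(c).

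No step is a genuine obstacle. The only care needed is the index bookkeeping in identifying $s$ with $n-1-t$, and that bookkeeping was already carried out in the proof of Theorem~\ref{thm:antiphase-char}.
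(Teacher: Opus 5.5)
Your proposal is correct and matches the paper's proof: rank-Latinness via Corollary~\ref{cor:autmax-latin}, the constant rank sum via the ``$\Leftarrow$'' direction of Theorem~\ref{thm:antiphase-char}, and $\EM+\EW=n(n+1)$ by summing over the $n$ matched pairs. Two details you include are sound but the paper leaves them implicit: you check that $P(G,A)$ is automorphism-maximal before invoking the corollary, and you compute $s=n-1-t$ directly.
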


\begin{proof}
Rank-Latinness is Corollary~\ref{cor:autmax-latin}, and the rank-sum identity is the
``$\Leftarrow$'' direction of Theorem~\ref{thm:antiphase-char}. Summing over the $n$ matched
pairs gives the egalitarian identity.
\end{proof}

Throughout the rest of this section we use the following coordinates. Since $A$ lists every
element of $G$ exactly once, any matching $\mu$ of $P(G,A)$ is encoded by its \emph{index
function}
\[
  d:G\to\{0,\dots,n-1\},\qquad \mu(m_g)=w_{g a_{d_g}},
\]
and $\mu$ is a matching precisely when $g\mapsto g a_{d_g}$ is a bijection of $G$. In these
coordinates $\rM(m_g,\mu(m_g))=d_g+1$, and the woman $w_{ga_e}$ ranks $m_g$ at rank $n-e$.

\begin{lemma}[Blocking in coordinates]\label{lem:blocking}
Let $\mu$ have index function $d$, and for $h\in G$ let $t_h$ denote the index of $h$'s
partner, i.e.\ $t_h=d_x$ where $x$ is the unique element with $xa_{d_x}=h$. Then $\mu$ is
stable if and only if
\[
  \text{for all } g\in G \text{ and all } e\in\{0,\dots,n-1\}:\qquad
  e<d_g \ \Longrightarrow\ t_{ga_e}\ \ge\ e .
\]
\end{lemma}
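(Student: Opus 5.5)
The lemma is a direct translation of the definition of a blocking pair into the index coordinates, so I will unwind that definition. A pair $(m_g,w)$ is described uniquely by writing $w=w_{ga_e}$ with $e\in\{0,\dots,n-1\}$, since $A$ lists every element of $G$ once. In this notation $\rM(m_g,w_{ga_e})=e+1$. Man $m_g$ strictly prefers $w_{ga_e}$ to $\mu(m_g)=w_{ga_{d_g}}$ exactly when $e+1<d_g+1$, that is, $e<d_g$. So the man-side half of the blocking condition is precisely the hypothesis $e<d_g$ of the displayed implication.

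Next I handle the woman's side. Put $h=ga_e$. From the coordinate remark before the lemma, the woman $w_{xa_f}$ ranks $m_x$ at rank $n-f$. Applying this with $x=g$, $f=e$ gives $\rW(w_h,m_g)=n-e$. Her current partner is the unique $m_x$ with $xa_{d_x}=h$. Applying the remark again with $f=d_x=t_h$ gives $\rW(w_h,\mu^{-1}(w_h))=n-t_h$. Hence $w_h$ strictly prefers $m_g$ to her partner iff $n-e<n-t_h$, i.e.\ iff $t_h<e$.

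Combining the two halves, $(m_g,w_{ga_e})$ blocks $\mu$ exactly when $e<d_g$ and $t_{ga_e}<e$. Stability means no such pair exists, and this is the stated implication $e<d_g\Rightarrow t_{ga_e}\ge e$ for all $g,e$.

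The one point needing care is that the rank formula $n-f$ really is correct. I would check it against Definition~\ref{def:template}. Woman $w_h$'s list is $m_{ha_{n-1}^{-1}},\dots,m_{ha_0^{-1}}$, so $m_{ha_f^{-1}}$ sits at position $n-f$. With $h=xa_f$ we get $ha_f^{-1}=x$, which confirms the formula. Equivalently it follows from the rank-sum identity in Proposition~\ref{prop:tmpl-latin}. There is also a degenerate case: if $x=g$, then $d_g=e$, which contradicts $e<d_g$, so this case never arises. I expect no real obstacle beyond this bookkeeping.
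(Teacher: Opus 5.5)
Your proposal is correct and follows essentially the same route as the paper: the man's strict preference becomes $e<d_g$, the woman's becomes $t_{ga_e}<e$ via her ranks $n-e$ and $n-t_h$, and stability is the negation of their conjunction. Your explicit check of the rank formula $n-f$ against Definition~\ref{def:template} and your remark on the case $x=g$ are harmless extra care that the paper leaves implicit.
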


\begin{proof}
A pair $(m_g,w_h)$ blocks iff $m_g$ prefers $w_h$ to $\mu(m_g)$ and $w_h$ prefers $m_g$ to her
partner. Writing $h=ga_e$, the first condition says $e+1<d_g+1$, i.e.\ $e<d_g$. For the second,
$w_h$ ranks $m_g$ at rank $n-e$ and ranks her partner at rank $n-t_h$; she prefers $m_g$ iff
$n-e<n-t_h$, i.e.\ iff $t_h<e$. So a blocking pair exists iff there are $g,e$ with $e<d_g$ and
$t_{ga_e}<e$; the stated condition is the negation.
\end{proof}

\begin{proposition}[Canonical stability]\label{prop:canon-stable}
For every $G$, every ordering $A$ and every $t$, the canonical matching $\mu_t$ is stable in
$P(G,A)$.
\end{proposition}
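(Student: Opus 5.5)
I will verify the criterion of Lemma~\ref{lem:blocking} directly for the index function of $\mu_t$, which is the constant function $d_g = t$ for all $g \in G$.

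First, $\mu_t$ is a matching, because $g \mapsto g a_t$ is a bijection of $G$ (right multiplication by $a_t$). Next, I compute the partner index $t_h$. The unique $x$ with $x a_{d_x} = h$ is $x = h a_t^{-1}$, and $d_x = t$. Hence $t_h = t$ for every $h \in G$.

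Now I check the implication in Lemma~\ref{lem:blocking}. Fix $g$ and $e$ with $e < d_g = t$. Then $t_{g a_e} = t > e$, so the required inequality $t_{g a_e} \ge e$ holds, and $\mu_t$ is stable.

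Alternatively, the same conclusion follows from Proposition~\ref{prop:tmpl-latin} without the lemma. In a blocking pair $(m,w)$, man $m$ would need rank of $w$ below $t+1$. Woman $w$'s partner is also a man who ranks her $t+1$, so by the constant rank sum she ranks that partner at $n-t$. She ranks $m$ at $n+1-\rM(m,w) > n-t$, so she prefers her partner and the pair does not block.

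There is no real obstacle here. The only point requiring care is the identification $t_h = t$, namely that every woman's partner under $\mu_t$ also sits at index $t$. This is immediate from the constancy of $d$.
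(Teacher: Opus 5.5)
Your proof is correct, and your main argument is the paper's own: with $d\equiv t$ every partner index is $t_h=t$, so the condition of Lemma~\ref{lem:blocking} reduces to $e<t\Rightarrow t\ge e$. Your alternative argument via the constant rank sum of Proposition~\ref{prop:tmpl-latin} is also valid, and it is not circular, since that proposition does not depend on this one.
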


\begin{proof}
Here $d\equiv t$ is constant, hence $t_h=t$ for every $h$. The implication of
Lemma~\ref{lem:blocking} requires, for $e<d_g=t$, that $t_{ga_e}=t\ge e$, which holds.
\end{proof}

The next lemma is the engine of everything that follows. It says that the top level set of the
index function of any stable matching is not merely nonempty but \emph{group-theoretically
rigid}.

\begin{lemma}[Coset lemma]\label{lem:coset}
Let $\mu$ be a stable matching of $P(G,A)$ with index function $d$, let $b=\max_{g\in G} d_g$,
and suppose $b\ge 1$. Put $D_b=\{g\in G : d_g=b\}$ and $H_b=\langle q_b\rangle$. Then
\[
  g\in D_b \ \Longrightarrow\ gq_b\in D_b ,
\]
so $D_b$ is a union of left cosets of $H_b$. In particular $|H_b|$ divides $|D_b|$.
\end{lemma}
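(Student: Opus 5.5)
The plan is to apply the blocking criterion of Lemma~\ref{lem:blocking} to a single well-chosen pair, namely a man at the top level together with the woman one step above his current partner in his list, and show that this woman's partner must be $m_{gq_b}$ and must also sit at level $b$.

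First, fix $g\in D_b$, so $d_g=b\ge 1$. Take $e=b-1$; then $e<d_g$, and Lemma~\ref{lem:blocking} gives $t_h\ge b-1$ for the woman $h=ga_{b-1}$. On the other hand, $t_h=d_x$ for the unique $x$ with $xa_{d_x}=h$, and $d_x\le b$ because $b$ is the maximum of $d$. Hence $t_h\in\{b-1,b\}$.

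Second, I will rule out $t_h=b-1$. In that case $xa_{b-1}=ga_{b-1}$, so $x=g$ and therefore $d_g=b-1$, contradicting $d_g=b$. So $t_h=b$. This means $d_x=b$ with $xa_b=ga_{b-1}$, that is,
\[
  x=ga_{b-1}a_b^{-1}=gq_b .
\]
Thus $gq_b\in D_b$.

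Finally, iterating this shows that $D_b$ is closed under right multiplication by every power of $q_b$. Since $G$ is finite, these powers exhaust $H_b=\langle q_b\rangle$, so $gH_b\subseteq D_b$ for each $g\in D_b$. Hence $D_b$ is a union of left cosets of $H_b$, and $|H_b|$ divides $|D_b|$ because distinct left cosets are disjoint and each has size $|H_b|$.

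I do not expect a genuine obstacle here. The only point that needs care is the bookkeeping in the second step: being precise that $t_h=b-1$ forces the partner of $w_h$ to be $m_g$ itself. That is exactly where the maximality of $b$ and the bijectivity of $g\mapsto ga_{d_g}$ enter.
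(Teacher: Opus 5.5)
Your proof is correct and follows the paper's argument essentially verbatim: you test the pair $(m_g,w_{ga_{b-1}})$, use maximality of $b$ to force the partner's index to equal $b$, and read off $x=gq_b$ from $xa_b=ga_{b-1}$. The only cosmetic difference is the order of two steps. You first take the non-strict bound $t_h\ge b-1$ from Lemma~\ref{lem:blocking} and then exclude equality by cancellation ($x=g$). The paper instead notes $m_x\ne m_g$ up front and obtains the strict inequality directly.
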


\begin{proof}
Let $g\in D_b$, so $d_g=b\ge 1$, and consider the woman $w_{ga_{b-1}}$. Since $b-1<b=d_g$, man
$m_g$ prefers her to his own partner $w_{ga_b}$. Let $m_x$ be her partner, so
\begin{equation}\label{eq:xrel}
  x\,a_{d_x} \;=\; g\,a_{b-1}.
\end{equation}
She ranks $m_g$ at rank $n-(b-1)=n+1-b$ and ranks $m_x$ at rank $n-d_x$. Stability forbids the
blocking pair $(m_g,w_{ga_{b-1}})$, so she must strictly prefer $m_x$; note $m_x\ne m_g$
because $d_g=b\ne b-1$. Hence $n-d_x<n+1-b$, i.e.\ $d_x>b-1$, i.e.\ $d_x\ge b$. Maximality of
$b$ gives $d_x=b$, so $x\in D_b$. Substituting $d_x=b$ into~\eqref{eq:xrel} gives
$x a_b=g a_{b-1}$, whence
\[
  x \;=\; g\,a_{b-1}a_b^{-1} \;=\; g\,q_b .
\]
Thus $gq_b=x\in D_b$, proving the implication. Iterating, $D_b$ is closed under right
multiplication by every power of $q_b$, hence $gH_b\subseteq D_b$ for every $g\in D_b$; so
$D_b$ is a union of left cosets of $H_b$, and $|H_b|$ divides $|D_b|$.
\end{proof}

\begin{theorem}[Sharpened generator criterion]\label{thm:gencrit}
Let $\mu$ be a stable matching of $P(G,A)$ with index function $d$ and maximal index
$b=\max_g d_g$. If $b=0$, or if $b\ge1$ and $\langle q_b\rangle=G$, then $\mu=\mu_b$.

Consequently, if $\langle q_b\rangle=G$ for every $b=1,\dots,n-1$, then
\[
  \Stab(P(G,A))=\{\mu_0,\dots,\mu_{n-1}\},
\]
so $|\Stab(P(G,A))|=n$ and the stable lattice is an $n$-element chain.
\end{theorem}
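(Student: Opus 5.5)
The plan is to reduce everything to the Coset Lemma (Lemma~\ref{lem:coset}) together with a counting argument on the index function. The key observation is this: once the top level set $D_b$ is all of $G$, the index function is constant, $d\equiv b$, and then $\mu(m_g)=w_{ga_b}$ for every $g$, i.e.\ $\mu=\mu_b$. So the work is to show $D_b=G$ in each case of the hypothesis.

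\emph{Case $b=0$.} Here $\max_g d_g=0$ and $d_g\ge 0$ for all $g$, so $d\equiv 0$ and $\mu=\mu_0$ immediately. No stability input is needed.

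\emph{Case $b\ge 1$ with $\langle q_b\rangle=G$.} $D_b$ is nonempty because $b$ is attained as a maximum. By Lemma~\ref{lem:coset}, $D_b$ is a union of left cosets of $H_b=\langle q_b\rangle=G$. The only left coset of $G$ is $G$ itself, so $D_b=G$. Hence $d\equiv b$ and $\mu=\mu_b$. Concretely, starting from any $g_0\in D_b$, iterating $g\mapsto gq_b$ sweeps out all of $G$.

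\emph{The consequence.} Assume every $q_b$, $1\le b\le n-1$, generates $G$, and let $\mu$ be any stable matching. Its maximal index $b=\max_g d_g$ lies in $\{0,\dots,n-1\}$. Either $b=0$, or $b\ge1$ and $\langle q_b\rangle=G$ by hypothesis. The first part therefore gives $\mu=\mu_b$, so $\Stab(P(G,A))\subseteq\{\mu_0,\dots,\mu_{n-1}\}$. The reverse inclusion is Proposition~\ref{prop:canon-stable}.

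The $\mu_t$ are pairwise distinct because $ga_t\neq ga_s$ for $t\ne s$, so $|\Stab|=n$. Finally, under $\mu_t$ every man gets rank $t+1$, so $\mu_0\succ_M\mu_1\succ_M\cdots\succ_M\mu_{n-1}$ is a chain in men-dominance, exactly as in Corollary~\ref{cor:chain}.

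\emph{Expected difficulty.} There is essentially no obstacle left, since the Coset Lemma carries the stability content. The only point needing care is to note that $D_b\neq\varnothing$, so that "union of cosets of $G$" really forces $D_b=G$ rather than $D_b=\varnothing$.
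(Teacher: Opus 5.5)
Your proposal is correct and follows the paper's proof essentially verbatim. You handle $b=0$ trivially, apply the Coset Lemma with $\langle q_b\rangle=G$ so that the nonempty set $D_b$ must equal $G$, obtain the reverse inclusion from Proposition~\ref{prop:canon-stable}, and read off the chain from the constant rank $t+1$. Your explicit check that the $\mu_t$ are pairwise distinct fills in a point the paper leaves implicit.
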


\begin{proof}
If $b=0$ then $d\equiv0$ and $\mu=\mu_0$. If $b\ge1$ and $\langle q_b\rangle=G$, then by
Lemma~\ref{lem:coset} the set $D_b$ is a nonempty union of left cosets of $G$ itself, hence
$D_b=G$; that is, $d\equiv b$ and $\mu=\mu_b$.

For the second statement, every stable $\mu$ has some maximal index $b$, and the hypothesis
covers every possible value of $b$, so $\mu$ is canonical; the canonical matchings are stable by
Proposition~\ref{prop:canon-stable}. Under $\mu_k$ every man receives rank $k+1$, so the
men-dominance order on $\{\mu_0,\dots,\mu_{n-1}\}$ is the chain
$\mu_0\succ_M\cdots\succ_M\mu_{n-1}$.
\end{proof}

Note that the hypothesis is used only at the single value $b=\max_g d_g$ attached to $\mu$; this
is what ``sharpened'' refers to, and it is exactly the leverage needed for the converse.

\subsection{The converse}

We now show that the generator criterion is not merely sufficient but \emph{necessary}, by
constructing non-canonical stable matchings whenever some adjacent quotient fails to generate.
The construction is completely explicit.

\begin{theorem}[Converse of the generator criterion]\label{thm:converse}
Let $P(G,A)$ be a regular anti-phase template, let $b\in\{1,\dots,n-1\}$, and set
$H_b=\langle q_b\rangle$. Let $K\subseteq G$ be any union of left cosets of $H_b$, and define
\[
  d^{(b,K)}_g \;=\;
  \begin{cases}
    b, & g\in K,\\
    b-1, & g\notin K.
  \end{cases}
\]
Then $d^{(b,K)}$ is the index function of a stable matching $\mu^{(b,K)}$ of $P(G,A)$. Moreover
$\mu^{(b,G)}=\mu_b$ and $\mu^{(b,\emptyset)}=\mu_{b-1}$, while for every $K$ with
$\emptyset\subsetneq K\subsetneq G$ the matching $\mu^{(b,K)}$ is \emph{not} canonical.

Consequently
\[
  |\Stab(P(G,A))|=n
  \qquad\Longleftrightarrow\qquad
  \langle q_b\rangle=G \ \text{ for every } b=1,\dots,n-1 .
\]
\end{theorem}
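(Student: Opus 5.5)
The plan is to verify directly that $d^{(b,K)}$ defines a bijection and then apply Lemma~\ref{lem:blocking}. I expect the only genuinely non-trivial point to be the bijectivity, where closure of $K$ under right multiplication by $q_b$ enters. Once the partner indices are computed, stability should be almost automatic, because every index lies in $\{b-1,b\}$.

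\emph{Step 1 (matching).} The map is $g\mapsto g a_{d_g}$. It sends $K$ onto $Ka_b$ and $G\setminus K$ onto $(G\setminus K)a_{b-1}$. Since $K$ is a union of left cosets of $H_b$, we have $Kq_b=K$. Hence
\[
  Ka_{b-1}=Kq_ba_b=Ka_b .
\]
Right multiplication by $a_{b-1}$ is a bijection of $G$, so $(G\setminus K)a_{b-1}=G\setminus Ka_{b-1}=G\setminus Ka_b$. The two images are therefore complementary, and the map is a bijection of $G$. As a by-product, the partner index $t_h$ of Lemma~\ref{lem:blocking} is $b$ for $h\in Ka_b$ and $b-1$ otherwise. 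In particular $t_h\ge b-1$ for every $h$.

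\emph{Step 2 (stability).} By Lemma~\ref{lem:blocking} we must check that $e<d_g$ implies $t_{ga_e}\ge e$. Since $d_g\le b$, any such $e$ satisfies $e\le b-1\le t_{ga_e}$, so the condition holds. The identifications $\mu^{(b,G)}=\mu_b$ and $\mu^{(b,\emptyset)}=\mu_{b-1}$ are immediate from the definitions. For $\emptyset\subsetneq K\subsetneq G$, the index function takes both values $b$ and $b-1$. A canonical matching has constant index function, and index functions determine matchings, so $\mu^{(b,K)}$ is not canonical.

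\emph{Step 3 (the equivalence).} The direction ``$\Leftarrow$'' is Theorem~\ref{thm:gencrit}. For ``$\Rightarrow$'', argue by contraposition. Suppose $H_b\ne G$ for some $b$. Then $[G:H_b]\ge 2$, so we may take $K$ to be a single left coset of $H_b$, which is proper and nonempty. Step~2 then gives a non-canonical stable matching. The $n$ canonical matchings are distinct and stable by Proposition~\ref{prop:canon-stable}, so adding this one gives $|\Stab(P(G,A))|\ge n+1$.
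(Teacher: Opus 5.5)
Your proposal is correct and follows essentially the same route as the paper: coset-closure gives $Ka_b=Ka_{b-1}$ and hence bijectivity, the bound $t_h\ge b-1$ together with Lemma~\ref{lem:blocking} gives stability, and Theorem~\ref{thm:gencrit} plus an explicit non-canonical $\mu^{(b,K)}$ gives the equivalence. The only cosmetic difference is in the equivalence: you use a single coset to get $|\Stab(P(G,A))|\ge n+1$, whereas the paper counts all $2^{[G:H_b]}-2\ge 2$ proper nonempty unions to get $n+2$; either suffices.
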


\begin{proof}
\emph{$\mu^{(b,K)}$ is a matching.} We must check that $\varphi(g)=g\,a_{d_g}$ is a bijection of
$G$, where $d=d^{(b,K)}$. We have $\varphi(K)=Ka_b$ and $\varphi(G\setminus K)=(G\setminus
K)a_{b-1}$. Since $q_b=a_{b-1}a_b^{-1}$, we have $a_b a_{b-1}^{-1}=q_b^{-1}\in H_b$, and since
$K$ is a union of left cosets of $H_b$ we get $Kq_b^{-1}=K$, i.e.
\[
  K a_b a_{b-1}^{-1}=K, \qquad\text{that is,}\qquad K a_b=K a_{b-1}.
\]
Hence $\varphi(K)=Ka_{b-1}$ and $\varphi(G\setminus K)=(G\setminus K)a_{b-1}$ are disjoint with
union $G a_{b-1}=G$. So $\varphi$ is a bijection.

\emph{$\mu^{(b,K)}$ is stable.} The index function takes only the two values $b-1$ and $b$. We
compute the partner indices $t_h$. If $h\in Ka_b$ then, by the displayed identity, its
$\varphi$-preimage lies in $K$ and has index $b$, so $t_h=b$; if $h\in(G\setminus K)a_{b-1}$
then its preimage lies in $G\setminus K$ and has index $b-1$, so $t_h=b-1$. In either case
\[
  t_h \;\ge\; b-1 \qquad\text{for every } h\in G .
\]
Now suppose $(m_g,w_h)$ were a blocking pair, with $h=ga_e$. By Lemma~\ref{lem:blocking} this
requires $e<d_g$ and $t_h<e$, hence
\[
  t_h \;<\; e \;<\; d_g \;\le\; b ,
\]
so $t_h<e\le b-1$ and therefore $t_h\le b-2$, contradicting $t_h\ge b-1$. Hence there is no
blocking pair and $\mu^{(b,K)}$ is stable.

\emph{Canonicity.} If $K=G$ then $d\equiv b$ and $\mu^{(b,K)}=\mu_b$; if $K=\emptyset$ then
$d\equiv b-1$ and $\mu^{(b,K)}=\mu_{b-1}$. If $\emptyset\subsetneq K\subsetneq G$ then $d$ takes
both values $b-1$ and $b$, so it is not constant and $\mu^{(b,K)}$ is not canonical.

\emph{The equivalence.} If every $q_b$ generates $G$, then $|\Stab(P(G,A))|=n$ by
Theorem~\ref{thm:gencrit}. Conversely, suppose $\langle q_{b}\rangle=H\subsetneq G$ for some
$b$. Then the index $[G:H]\ge2$, so there are $2^{[G:H]}\ge4$ unions of left cosets of $H$, and
hence at least $2^{[G:H]}-2\ge2$ choices of $K$ with $\emptyset\subsetneq K\subsetneq G$. Each
yields a distinct non-canonical stable matching (distinct $K$ give distinct index functions).
Together with the $n$ canonical matchings this gives $|\Stab(P(G,A))|\ge n+2>n$.
\end{proof}

\begin{theorem}[Counting bound]\label{thm:count}
For every finite group $G$ of order $n$ and every ordering $A$,
\[
  |\Stab(P(G,A))| \;\ge\; n \;+\; \sum_{b=1}^{n-1}\Bigl(2^{\,[G:\langle q_b\rangle]}-2\Bigr).
\]
\end{theorem}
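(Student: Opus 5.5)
The plan is to count the stable matchings produced by Theorem~\ref{thm:converse} and show that, apart from the canonical ones, they are pairwise distinct. For each $b\in\{1,\dots,n-1\}$, let $\mathcal{K}_b$ be the family of unions $K$ of left cosets of $H_b=\langle q_b\rangle$ with $\emptyset\subsetneq K\subsetneq G$. Left cosets partition $G$ into $[G:H_b]$ blocks, so $|\mathcal{K}_b|=2^{[G:H_b]}-2$. Theorem~\ref{thm:converse} gives, for every $K\in\mathcal{K}_b$, a stable non-canonical matching $\mu^{(b,K)}$ whose index function is $d^{(b,K)}$. Together with the $n$ canonical matchings $\mu_0,\dots,\mu_{n-1}$, which are stable by Proposition~\ref{prop:canon-stable}, this yields a candidate family of stable matchings of the right size. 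It then suffices to show that the whole family is pairwise distinct.

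Distinctness reduces to distinctness of index functions. Since $A$ lists every element of $G$ once, the matching $\mu$ determines $d$ via $\mu(m_g)=w_{ga_{d_g}}$, so distinct index functions give distinct matchings.
\begin{enumerate}
\item[(i)] For a fixed $b$, distinct $K$ give distinct $d^{(b,K)}$, because $K=\{g : d^{(b,K)}_g=b\}$ is recovered from the index function.
\item[(ii)] The canonical matchings have constant $d$. Each $d^{(b,K)}$ with $K\in\mathcal{K}_b$ is nonconstant, so no $\mu^{(b,K)}$ coincides with a canonical matching, and the $\mu_t$ are distinct among themselves.
\item[(iii)] For $b\ne b'$, the value set of $d^{(b,K)}$ is exactly $\{b-1,b\}$, since $K$ is a proper nonempty subset. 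Likewise the value set of $d^{(b',K')}$ is exactly $\{b'-1,b'\}$. These two-element sets differ when $b\ne b'$, so the index functions differ.
\end{enumerate}

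Summing over $b$ then gives $n+\sum_b\bigl(2^{[G:\langle q_b\rangle]}-2\bigr)$ distinct stable matchings, which proves the bound. When $\langle q_b\rangle=G$, the term $2^1-2=0$ contributes nothing, consistent with Theorem~\ref{thm:gencrit}.

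I expect no real obstacle. The only point needing care is step (iii), namely that the families for different $b$ do not overlap. This is settled by the observation that a non-canonical $\mu^{(b,K)}$ has value set exactly $\{b-1,b\}$, which pins down $b$.
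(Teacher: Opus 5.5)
Your proposal is correct and matches the paper's proof essentially step for step. Both count the $n$ canonical matchings together with the $2^{[G:\langle q_b\rangle]}-2$ non-canonical two-level matchings $\mu^{(b,K)}$ from Theorem~\ref{thm:converse}, and both get disjointness across different $b$ from the fact that the index function of $\mu^{(b,K)}$ takes exactly the values $\{b-1,b\}$.
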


\begin{proof}
The $n$ canonical matchings are stable (Proposition~\ref{prop:canon-stable}). Fix
$b\in\{1,\dots,n-1\}$ and $H_b=\langle q_b\rangle$. By Theorem~\ref{thm:converse}, each of the
$2^{[G:H_b]}-2$ sets $K$ with $\emptyset\subsetneq K\subsetneq G$ that are unions of left cosets
of $H_b$ yields a non-canonical stable matching $\mu^{(b,K)}$, and distinct $K$ yield distinct
matchings.

It remains to check that matchings arising from different values of $b$ are distinct. The index
function of a non-canonical $\mu^{(b,K)}$ attains exactly the two values $b-1$ and $b$. Two such
value sets $\{b-1,b\}$ and $\{b'-1,b'\}$ coincide only if $b=b'$. Hence the families indexed by
distinct $b$ are pairwise disjoint, and they are disjoint from the canonical family. Summing
gives the bound.
\end{proof}

\begin{remark}[Sharpness]\label{rem:sharp}
The bound of Theorem~\ref{thm:count} is an equality for every ordering $A$ of every group of
order at most $5$, and for both groups of order $4$ (Observation~\ref{obs:census}). It is not an
equality in general: for $G=\mathbb{Z}_6$ and $A=(0,2,4,1,3,5)$ the bound gives $20$ while
$|\Stab|=24$. Thus the two-level matchings $\mu^{(b,K)}$ of Theorem~\ref{thm:converse} do not
exhaust the stable set in general, and a complete count remains open
(Problem~\ref{prob:count}).
\end{remark}

\subsection{Consequences: primality, not cyclicity}

\begin{corollary}[Prime order]\label{cor:prime}
Let $|G|=p$ be prime, so $G\cong\mathbb{Z}_p$. Then for \emph{every} ordering $A$ of $G$,
\[
  \Stab(P(\mathbb{Z}_p,A))=\{\mu_0,\dots,\mu_{p-1}\},
\]
so $|\Stab|=p$ and the stable lattice is a $p$-element chain.
\end{corollary}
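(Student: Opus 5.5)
The plan is to reduce the corollary directly to the generator criterion, Theorem~\ref{thm:gencrit}. Its second statement says that if $\langle q_b\rangle=G$ for every $b=1,\dots,n-1$, then the stable set is exactly $\{\mu_0,\dots,\mu_{n-1}\}$ and the lattice is an $n$-element chain. So the whole task is to verify this generation hypothesis for an arbitrary ordering $A$ when $n=p$ is prime.

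First, the identification $G\cong\mathbb{Z}_p$ is standard: a group of prime order is cyclic, since any non-identity element generates a subgroup whose order divides $p$ and is not $1$. Next, fix $b\in\{1,\dots,p-1\}$. As noted right after Definition~\ref{def:template}, $q_b=a_{b-1}a_b^{-1}\neq e$, because $A$ lists distinct elements. By Lagrange's theorem, $|\langle q_b\rangle|$ divides $p$ and exceeds $1$, so $\langle q_b\rangle=G$. This holds for every $b$ and does not depend on the ordering $A$.

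Finally, apply Theorem~\ref{thm:gencrit}. It gives $\Stab(P(\mathbb{Z}_p,A))=\{\mu_0,\dots,\mu_{p-1}\}$, and hence $|\Stab|=p$. The chain structure $\mu_0\succ_M\cdots\succ_M\mu_{p-1}$ also follows, since under $\mu_k$ every man receives rank $k+1$. As a cross-check, Theorem~\ref{thm:converse} and Theorem~\ref{thm:count} are consistent with this: every index $[G:\langle q_b\rangle]$ equals $1$, so each summand $2^1-2$ vanishes.

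I do not expect a real obstacle. The only point needing care is the observation $q_b\neq e$, because it is exactly what lets primality replace the ordering-dependent generation condition. It also explains the contrast with composite orders, such as Example~\ref{ex:z4}: there a nontrivial $q_b$ can generate a proper subgroup.
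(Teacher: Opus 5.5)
Your proposal is correct and follows the paper's own proof: each adjacent quotient $q_b=a_{b-1}a_b^{-1}$ is a non-identity element of a group of prime order, so it generates $G$, and Theorem~\ref{thm:gencrit} then gives the canonical stable set and the $p$-element chain. The cross-check via Theorem~\ref{thm:count} is a harmless consistency remark and is not needed for the argument.
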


\begin{proof}
Each $q_b=a_{b-1}a_b^{-1}$ is a non-identity element of a group of prime order, hence generates
it. Apply Theorem~\ref{thm:gencrit}.
\end{proof}

\begin{corollary}[Cyclic chain-lattice extreme]\label{cor:cyclic-extreme}
Let $G=\mathbb{Z}_n$ and $A=(0,1,\dots,n-1)$. Then $P(G,A)=C_n$, every adjacent quotient equals
$-1$, which generates $\mathbb{Z}_n$, and therefore $|\Stab(C_n)|=n$ with an $n$-element chain
lattice.
\end{corollary}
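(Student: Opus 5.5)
The plan is to reduce the statement to Theorem~\ref{thm:gencrit}, so that only two facts need checking: that $P(\mathbb{Z}_n,(0,1,\dots,n-1))$ coincides with $C_n$, and that every adjacent quotient generates $\mathbb{Z}_n$.

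\emph{Step 1: identify the profile.} I write $\mathbb{Z}_n$ additively, so the group product $ga$ becomes $g+a$ and the inverse $a^{-1}$ becomes $-a$. With $a_t=t$, man $m_g$ ranks $w_{g}\succ w_{g+1}\succ\cdots\succ w_{g+n-1}$, which is exactly $m_g$'s list in $C_n$. Woman $w_h$ ranks $m_{h-a_{n-1}}\succ m_{h-a_{n-2}}\succ\cdots\succ m_{h-a_0}$. Substituting $a_t=t$, this is $m_{h-(n-1)}\succ m_{h-(n-2)}\succ\cdots\succ m_h$. Reducing modulo $n$ turns it into $m_{h+1}\succ m_{h+2}\succ\cdots\succ m_h$, which is the women's list in $C_n$. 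Hence $P(G,A)=C_n$ as profiles, not merely up to isomorphism.

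\emph{Step 2: compute the quotients.} In additive notation $q_b=a_{b-1}-a_b=(b-1)-b=-1$ for every $b=1,\dots,n-1$. Since $-1$ is a unit generator of $\mathbb{Z}_n$, we have $\langle q_b\rangle=\mathbb{Z}_n$ for all $b$.

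\emph{Step 3: apply the criterion.} The hypothesis of the second part of Theorem~\ref{thm:gencrit} therefore holds for every $b$. That theorem gives $\Stab(C_n)=\{\mu_0,\dots,\mu_{n-1}\}$, with the lattice an $n$-element chain. I also note that the canonical $\mu_t(m_g)=w_{g+t}$ is exactly the shift matching $\mu_t$ of Section~\ref{sec:cyclic}, which recovers Proposition~\ref{prop:shift}. For $n=1$ the claim is trivial, since there are no quotients and one matching.

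The only point that needs care is the index bookkeeping in Step~1: the reversal $b_s=a_{n-1-s}^{-1}$ combined with the inversion must reproduce the "start at $m_{j+1}$" convention of $C_n$. The reduction $h-(n-1)\equiv h+1$ is what makes this work. Everything else is immediate.
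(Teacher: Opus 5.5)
Your proof is correct and follows essentially the same route as the paper. You identify $P(\mathbb{Z}_n,(0,1,\dots,n-1))$ with $C_n$ by reducing $h-(n-1)\equiv h+1 \pmod n$ in the women's lists, compute $q_b=(b-1)-b=-1$, and invoke Theorem~\ref{thm:gencrit}; your index bookkeeping is just a slightly more explicit version of the paper's one-line identification.
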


\begin{proof}
In additive notation $q_b=a_{b-1}-a_b=(b-1)-b=-1$, a generator of $\mathbb{Z}_n$; apply
Theorem~\ref{thm:gencrit}. That $P(\mathbb{Z}_n,(0,1,\dots,n-1))=C_n$ is immediate from
Definition~\ref{def:template}: $m_i$ ranks $w_{i+t}$ at rank $t+1$, and $w_h$ ranks
$m_{h-a_{n-1-s}}$ at rank $s+1$, i.e.\ $w_j: m_{j+1}\succ m_{j+2}\succ\cdots\succ m_j$.
\end{proof}

The following example corrects a claim implicit in earlier versions of this work.

\begin{example}[Cyclicity is not enough]\label{ex:z4}
Let $G=\mathbb{Z}_4$ and $A=(0,2,1,3)$. The adjacent quotients are
\[
  q_1=0-2=2,\qquad q_2=2-1=1,\qquad q_3=1-3=-2=2 ,
\]
so $\langle q_1\rangle=\langle q_3\rangle=\{0,2\}$ has index $2$, while
$\langle q_2\rangle=\mathbb{Z}_4$. Theorem~\ref{thm:count} gives
$|\Stab|\ge 4+(2^2-2)+(2^1-2)+(2^2-2)=4+2+0+2=8$, and direct enumeration confirms
$|\Stab(P(\mathbb{Z}_4,(0,2,1,3)))|=8$. In particular a template over a \emph{cyclic} group can
have strictly more than $n$ stable matchings, and its lattice is not a chain. Similarly
$A=(0,1,3,2)$ gives $|\Stab|=6$.

Hence the chain-lattice phenomenon is not a consequence of cyclicity of $G$; by
Corollary~\ref{cor:prime} and Theorem~\ref{thm:converse}, it is governed by whether every
adjacent quotient generates, which for a fixed group is a property of the ordering $A$, and
which holds automatically for all $A$ exactly when $|G|$ is prime.
\end{example}

\begin{observation}[The Klein four-group template]\label{obs:klein}
Let $G=V_4=\mathbb{Z}_2\times\mathbb{Z}_2$ and
$A=\bigl((0,0),(1,0),(0,1),(1,1)\bigr)$. Every non-identity element of $V_4$ has order $2$, and
\[
  q_1=(1,0),\qquad q_2=(1,0)(0,1)=(1,1),\qquad q_3=(0,1)(1,1)=(1,0),
\]
so each $\langle q_b\rangle$ has order $2$ and index $2$. Theorem~\ref{thm:count} gives
\[
  |\Stab(P(V_4,A))| \;\ge\; 4+3\bigl(2^{2}-2\bigr) \;=\; 4+6 \;=\; 10 ,
\]
and direct enumeration confirms equality: $|\Stab(P(V_4,A))|=10$. Since $10=f(4)$ is the
classical maximum number of stable matchings over all size-$4$
profiles~\cite{Knuth,Gusfield,Karlin}, the Klein anti-phase template is a compact group-theoretic instance attaining that
maximum — and Theorem~\ref{thm:count} \emph{derives} the value $10$ rather than merely reporting
an enumeration.

The template is rank-Latin and automorphism-maximal with $|\Aut|=4$. Its stable lattice has $10$
elements, is not a chain (it has $3$ incomparable pairs), and its rotation poset has $6$
elements: it is the ordinal sum $A_2\oplus A_2\oplus A_2$ of three $2$-element antichains, whose
ideal lattice indeed has $4+3+3=10$ elements, consistent with $L\cong J(R)$.

Finally, in contrast with Theorem~\ref{thm:parity}(b) — which shows that the cyclic profile
$C_4$ has \emph{no} exactly sex-equal stable matching — the Klein template of the same size has
\emph{two}, namely the matchings with $\EM=\EW=10$. Thus, at $n=4$, replacing $\mathbb{Z}_4$ by
$V_4$ in the anti-phase construction restores exact sex-equality while preserving
rank-Latinness, the constant rank sum $\EM+\EW=n(n+1)=20$, and automorphism-maximality.
\end{observation}

\section{Independent composition and direct products}\label{sec:products}

\begin{definition}
Let $M=M_1\sqcup M_2$ and $W=W_1\sqcup W_2$ with $|M_1|=|W_1|$. Suppose every $m\in M_1$ prefers
every woman of $W_1$ to every woman of $W_2$, and every $w\in W_1$ prefers every man of $M_1$ to
every man of $M_2$. Let $S_i$ be the induced subprofile on $(M_i,W_i)$; write $S=S_1\otimes S_2$.
\end{definition}

\begin{proposition}\label{prop:product}
If $S=S_1\otimes S_2$ then $\Stab(S)\cong\Stab(S_1)\times\Stab(S_2)$,
$L(S)\cong L(S_1)\times L(S_2)$, and $R(S)\cong R(S_1)\sqcup R(S_2)$.
\end{proposition}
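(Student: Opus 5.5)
The plan is to build the bijection explicitly as $(\mu_1,\mu_2)\mapsto\mu_1\cup\mu_2$, the union of a stable matching of $S_1$ (on $M_1,W_1$) and one of $S_2$ (on $M_2,W_2$). The lattice and rotation-poset statements will then follow formally.

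\emph{Every stable matching of $S$ splits.} Let $\mu\in\Stab(S)$. Suppose some $m\in M_1$ were matched into $W_2$. Since $|M_1|=|W_1|$, counting shows that some $w\in W_1$ is then matched to a man of $M_2$. Now $m$ prefers $w$ (in $W_1$) to his partner (in $W_2$), and $w$ prefers $m$ (in $M_1$) to her partner (in $M_2$), so $(m,w)$ blocks $\mu$. Hence $\mu(M_1)=W_1$ and $\mu(M_2)=W_2$. The restriction $\mu|_{M_i}$ is stable in $S_i$, because a blocking pair inside $(M_i,W_i)$ for $S_i$ uses the same induced preferences and would block $\mu$ in $S$.

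\emph{Every union of stable pieces is stable.} Conversely, let $\mu_i\in\Stab(S_i)$ and $\mu=\mu_1\cup\mu_2$. I would rule out blocking pairs $(m,w)$ by cases.
\begin{itemize}
\item If $m,w$ lie in the same block, the pair would block $\mu_i$ in $S_i$.
\item If $m\in M_1$ and $w\in W_2$, then $m$ holds a partner in $W_1$, whom he prefers to $w$, so the pair does not block.
\item If $m\in M_2$ and $w\in W_1$, then $w$ holds a partner in $M_1$, whom she prefers to $m$, so again the pair does not block.
\end{itemize}
The two maps are mutually inverse, which gives $\Stab(S)\cong\Stab(S_1)\times\Stab(S_2)$.

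\emph{Lattice and rotation poset.} Under the bijection, $\mu\succeq_M\nu$ holds iff every man in $M_1$ and every man in $M_2$ weakly prefers $\mu$. This is exactly the componentwise order, so $L(S)\cong L(S_1)\times L(S_2)$. For rotations, I will use $L\cong J(R)$ and the fact that $J(P\sqcup Q)\cong J(P)\times J(Q)$. The rotation poset of a finite distributive lattice is recovered, up to isomorphism, as its poset of join-irreducibles. The join-irreducibles of a product $L_1\times L_2$ are the pairs $(j,0)$ and $(0,j)$ with $j$ join-irreducible in the corresponding factor, and these form the disjoint union $J(L_1)\sqcup J(L_2)$. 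This yields $R(S)\cong R(S_1)\sqcup R(S_2)$. If one wants the isomorphism at the level of actual rotations rather than abstractly, I would instead note that rotations exposed in a split matching only involve pairs inside one block, since a man's next stable partner stays in his block.

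\emph{Main obstacle.} I expect the only real subtlety to be the rotation statement: one has to be clear that ``$\cong$'' means isomorphism of posets, obtained via Birkhoff duality. Beyond that, the splitting step depends on the hypothesis $|M_1|=|W_1|$, which is exactly what the counting argument needs.
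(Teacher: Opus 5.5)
Your proposal is correct and follows the paper's proof step for step. The steps are: the counting argument using $|M_1|=|W_1|$ to produce a blocking pair, restriction and union (your case analysis spells out the paper's ``within or across groups''), the componentwise men-dominance order, and the join-irreducibles of a product giving the disjoint union of rotation posets. One small notational point: you write $J(L_1)\sqcup J(L_2)$ for the posets of join-irreducibles, but $J(\cdot)$ already denotes the lattice of order ideals in $L\cong J(R)$, so use a different symbol there.
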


\begin{proof}
If some $m\in M_1$ is matched into $W_2$, then since $|M_1|=|W_1|$ some $w\in W_1$ is matched
into $M_2$; the separation hypothesis makes $(m,w)$ a blocking pair. So every stable matching
respects the two groups. Restriction gives stable matchings of $S_1$ and $S_2$, and conversely
the union of stable matchings of $S_1$ and $S_2$ has no blocking pair, within or across groups.
The men-dominance order is componentwise, giving the product lattice. The join-irreducibles of a
product of finite distributive lattices are those of either factor paired with the bottom
element of the other, so the rotation poset is the disjoint union.
\end{proof}

\begin{remark}
This formalizes a standard separability idea; it is used here only to fix a product convention
for decomposition questions.
\end{remark}

\section{Computational observations}\label{sec:comp}

All computations in this section are reproduced by the accompanying script
\texttt{anc/\allowbreak verify\_smp\_\allowbreak v15.py}, which uses only the Python standard
library.

\begin{observation}[Size-three census]\label{obs:census3}
Over all $(3!)^6=46656$ strict complete size-three profiles:
\begin{enumerate}
\item[(a)] $|\Stab(S)|\in\{1,2,3\}$, with distribution $34080:11484:1092$;
\item[(b)] the stable-lattice types are the three chains of lengths $1,2,3$;
\item[(c)] under $S_3\times S_3$ relabeling there are $1300$ isomorphism classes, and $669$
after also identifying sex-duality;
\item[(d)] among profiles with exactly three stable matchings there are $31$ label-isomorphism
classes, and $17$ after sex-duality;
\item[(e)] the maximum automorphism group order is $3$, consistent with
Proposition~\ref{prop:semireg}.
\end{enumerate}
\end{observation}

\begin{observation}[Census over orderings]\label{obs:census}
Fix a group $G$ of order $n$. Right-translating $A$ by $a_0^{-1}$ gives an isomorphic profile,
so we may normalize $a_0=e$ and range over the $(n-1)!$ remaining orderings. For every group of
order at most $6$ and every normalized ordering $A$ (and for sampled orderings of the five
groups of order $8$), the script verifies:
\begin{enumerate}
\item[(a)] the equivalence of Theorem~\ref{thm:converse}: $|\Stab(P(G,A))|=n$ if and only if
every $q_b$ generates $G$;
\item[(b)] the lower bound of Theorem~\ref{thm:count};
\item[(c)] the coset lemma (Lemma~\ref{lem:coset}) for every stable matching.
\end{enumerate}
The bound of Theorem~\ref{thm:count} is an equality for all $A$ when
$G\in\{\mathbb{Z}_2,\mathbb{Z}_3,\mathbb{Z}_4,V_4,\mathbb{Z}_5\}$ and, by
Corollary~\ref{cor:prime}, trivially so for $\mathbb{Z}_7$. It is strict for some orderings of
$\mathbb{Z}_6$ (e.g.\ $A=(0,2,4,1,3,5)$: bound $20$, true value $24$), of $S_3$, and of all five
groups of order $8$. Representative values for $n=4$, illustrating the dependence on $A$:
\smallskip

\begin{center}
\begin{tabular}{llcc}
\toprule
$G$ & $A$ (normalized) & $\bigl(|\langle q_1\rangle|,|\langle q_2\rangle|,|\langle q_3\rangle|\bigr)$ & $|\Stab|$ \\
\midrule
$\mathbb{Z}_4$ & $(0,1,2,3)=C_4$ & $(4,4,4)$ & $4$ \\
$\mathbb{Z}_4$ & $(0,3,2,1)$ & $(4,4,4)$ & $4$ \\
$\mathbb{Z}_4$ & $(0,1,3,2)$ & $(4,2,4)$ & $6$ \\
$\mathbb{Z}_4$ & $(0,3,1,2)$ & $(4,2,4)$ & $6$ \\
$\mathbb{Z}_4$ & $(0,2,1,3)$ & $(2,4,2)$ & $8$ \\
$\mathbb{Z}_4$ & $(0,2,3,1)$ & $(2,4,2)$ & $8$ \\
$V_4$ & any of the $6$ normalized orderings & $(2,2,2)$ & $10$ \\
\bottomrule
\end{tabular}
\end{center}
\smallskip

\noindent
In every row the value of $|\Stab|$ equals the bound of Theorem~\ref{thm:count}. The table also
makes Example~\ref{ex:z4} concrete: within the single cyclic group $\mathbb{Z}_4$, the number of
stable matchings ranges over $\{4,6,8\}$ as $A$ varies.
\end{observation}

\begin{observation}[Cyclic checks]
The script verifies Propositions~\ref{prop:cyclic-latin}, \ref{prop:shift} and
\ref{prop:spectrum}, Corollary~\ref{cor:chain} and Theorem~\ref{thm:parity} for
$n=2,\dots,8$, and $|\Aut(C_n)|=n$ for $n=2,\dots,6$. It also verifies
Theorem~\ref{thm:antiphase-char} exhaustively over all $(n!)^2$ regular normal forms
$P(G,A,B)$ for $G\in\{\mathbb{Z}_3,\mathbb{Z}_4,V_4\}$: the constant rank-sum profiles are
exactly the $n!$ anti-phase templates, with no exceptions.
\end{observation}

\section{Open problems}\label{sec:open}

Problems~3 and~4 of the previous version are settled by Theorems~\ref{thm:converse}
and~\ref{thm:count} respectively (the latter only in part: a bound, not a formula). The
following remain.

\begin{problem}\label{prob:count}
Determine $|\Stab(P(G,A))|$ exactly in terms of $G$ and the subgroup chain
$\langle q_1\rangle,\dots,\langle q_{n-1}\rangle$. By Theorem~\ref{thm:count} the answer is at
least $n+\sum_b (2^{[G:\langle q_b\rangle]}-2)$, with equality for all groups of order $\le5$
and both groups of order $4$, but not in general (Remark~\ref{rem:sharp}). Which stable
matchings are missed by the two-level family $\mu^{(b,K)}$?
\end{problem}

\begin{problem}
Describe the stable lattice $L(P(G,A))$ and the rotation poset $R(P(G,A))$ structurally. For
the Klein template, $R\cong A_2\oplus A_2\oplus A_2$ (Observation~\ref{obs:klein}); is there a
general formula in terms of the subgroups $\langle q_b\rangle$?
\end{problem}

\begin{problem}
Classify regular anti-phase templates $P(G,A)$ up to profile isomorphism. Right translation of
$A$ and application of an automorphism of $G$ give isomorphic templates; are these the only
identifications?
\end{problem}

\begin{problem}
Over all groups $G$ of order $n$ and all orderings $A$, how large can $|\Stab(P(G,A))|$ be, and
how does this compare with the maximum $f(n)$ over all size-$n$ profiles? At $n=4$ the Klein
template attains $f(4)=10$ (Observation~\ref{obs:klein}). Does the elementary abelian group
$\mathbb{Z}_2^k$, whose subgroups $\langle q_b\rangle$ all have index $2^{k-1}$, give
asymptotically good lower-bound constructions, in the spirit of~\cite{Benjamin,Karlin}?
\end{problem}

\begin{problem}
Relate the regular normal form of Theorem~\ref{thm:normalform} to autotopism groups of Latin
squares~\cite{Denes}.
\end{problem}

\begin{problem}
Extend the sex-equality analysis of Section~\ref{sec:cyclic} to general templates: for which
$(G,A)$ does $P(G,A)$ admit an exactly sex-equal stable matching? By
Proposition~\ref{prop:tmpl-latin} the egalitarian score is always $n(n+1)$, so exact
sex-equality means $\EM=n(n+1)/2$; Theorem~\ref{thm:parity} and Observation~\ref{obs:klein}
show the answer depends on more than $n$.
\end{problem}

\section*{Appendix: small cases}

For $n=2$ the stable matchings of $C_2$ are $\mu_0=(0,1)$ and $\mu_1=(1,0)$, with imbalances
$-2$ and $+2$. For $n=3$ they are $(0,1,2)$, $(1,2,0)$, $(2,0,1)$, with imbalances $-6,0,+6$;
the middle one is the unique exactly sex-equal matching, as predicted by
Theorem~\ref{thm:parity}(a). For $n=4$ the cyclic profile $C_4$ has the four shifts
$(0,1,2,3)$, $(1,2,3,0)$, $(2,3,0,1)$, $(3,0,1,2)$ with imbalances $-12,-4,+4,+12$ — no exactly
sex-equal matching, as predicted by Theorem~\ref{thm:parity}(b). By contrast the Klein template
at the same $n=4$ has ten stable matchings, two of which are exactly sex-equal
(Observation~\ref{obs:klein}), and the template $P(\mathbb{Z}_4,(0,2,1,3))$ has eight
(Example~\ref{ex:z4}).

\section*{Reproducibility}

The source package contains the ancillary script \texttt{anc/verify\_smp\_v15.py}, which uses
only the Python standard library and runs in well under a minute. It verifies: the stable shifts of
$C_n$ and the Latin, rank-energy and parity statements for $n=2,\dots,8$; $|\Aut(C_n)|=n$ for
$n=2,\dots,6$; the anti-phase characterization (Theorem~\ref{thm:antiphase-char}) exhaustively
for $\mathbb{Z}_3$, $\mathbb{Z}_4$ and $V_4$; the coset lemma, the converse and the counting
bound for all groups of order $\le6$ and sampled orderings of all five groups of order $8$; the
size-three census of Observation~\ref{obs:census3}; and the Klein-template data of
Observation~\ref{obs:klein}, including its rotation poset and its two sex-equal matchings.


\begin{thebibliography}{99}

\bibitem{Benjamin}
A.~T. Benjamin, C.~Converse, and H.~A. Krieger,
\emph{How do I marry thee? Let me count the ways},
Discrete Appl. Math. \textbf{59} (1995), 285--292.

\bibitem{Birkhoff}
G.~Birkhoff,
\emph{Lattice Theory}, 3rd ed.,
Amer. Math. Soc. Colloq. Publ. \textbf{25}, AMS, 1967.

\bibitem{Borodin}
M.~Borodin, E.~Chen, A.~Duncan, T.~Khovanova, B.~Litchev, J.~Liu, V.~Moroz, M.~Qian,
R.~Raghavan, G.~Rastogi, and M.~Voigt,
\emph{Sequences of the stable matching problem},
J. Integer Seq. \textbf{27} (2024); also arXiv:2201.00645.

\bibitem{Bubboloni}
D.~Bubboloni, M.~Gori, and C.~Meo,
\emph{Resolute and symmetric mechanisms for two-sided matching problems},
J. Math. Econom. \textbf{118} (2025), 103130.

\bibitem{Denes}
J.~D\'enes and A.~D. Keedwell,
\emph{Latin Squares and Their Applications},
Academic Press, 1974.

\bibitem{Faenza}
Y.~Faenza and X.~Zhang,
\emph{Legal assignments and fast EADAM with consent via classical theory of stable matchings},
preprint, 2018.

\bibitem{GaleShapley}
D.~Gale and L.~S. Shapley,
\emph{College admissions and the stability of marriage},
Amer. Math. Monthly \textbf{69} (1962), 9--15.

\bibitem{Gusfield}
D.~Gusfield and R.~W. Irving,
\emph{The Stable Marriage Problem: Structure and Algorithms},
MIT Press, 1989.

\bibitem{Irving}
R.~W. Irving, P.~Leather, and D.~Gusfield,
\emph{An efficient algorithm for the optimal stable marriage},
J. ACM \textbf{34} (1987), 532--543.

\bibitem{Karlin}
A.~R. Karlin, S.~Oveis Gharan, and R.~Weber,
\emph{A simply exponential upper bound on the maximum number of stable matchings},
in: Proc. 50th ACM STOC, 2018, pp.~920--925.

\bibitem{Kato}
A.~Kato,
\emph{Complexity of the sex-equal stable marriage problem},
Japan J. Indust. Appl. Math. \textbf{10} (1993), 1--19.

\bibitem{Knuth}
D.~E. Knuth,
\emph{Mariages stables},
Les Presses de l'Universit\'e de Montr\'eal, 1976.

\bibitem{Manlove}
D.~F. Manlove,
\emph{Algorithmics of Matching Under Preferences},
World Scientific, 2013.

\bibitem{McDermid}
E.~McDermid and R.~W. Irving,
\emph{Sex-equal stable matchings: complexity and exact algorithms},
Algorithmica \textbf{68} (2014), 545--570.

\end{thebibliography}
\end{document}